\newtheorem{thm}{Theorem}[section]
\newtheorem{cor}[thm]{Corollary}
\newtheorem{lem}[thm]{Lemma}
\theoremstyle{definition}
\newtheorem{defn}[thm]{Definition}
\theoremstyle{remark}
\newtheorem{rem}[thm]{Remark}
\theoremstyle{conclusion}
\theoremstyle{question}
\numberwithin{equation}{section}
\begin{document}
\title[Critical order H\'{e}non-Lane-Emden type equations on $\mathbb{R}^{n}_{+}$]{Liouville type theorem for critical order H\'{e}non-Lane-Emden type equations on a half space and its applications}

\author{Wei Dai, Guolin Qin}

\address{School of Mathematics and Systems Science, Beihang University (BUAA), Beijing 100083, P. R. China, and LAGA, Universit\'{e} Paris 13 (UMR 7539), Paris, France}
\email{weidai@buaa.edu.cn}

\address{Institute of Applied Mathematics, Chinese Academy of Science, Beijing 100190, and University of Chinese Academy of science, Beijing 100049, P. R. China}
\email{qinguolin18@mails.ucas.ac.cn}

\thanks{Wei Dai is supported by the NNSF of China (No. 11501021) and the State Scholarship Fund of China (No. 201806025011).}

\begin{abstract}
In this paper, we are concerned with the critical order H\'{e}non-Lane-Emden type equations with Navier boundary condition on a half space $\mathbb{R}^n_+$:
\begin{equation}\label{NPDE0}\\\begin{cases}
(-\Delta)^{\frac{n}{2}} u(x)=f(x,u(x)), \,\,\,\,\,\, \,\, u(x)\geq0, \,\,\,\, \,\,\,\,\, x\in\mathbb{R}^{n}_+, \\
u=(-\Delta)u=\cdots=(-\Delta)^{\frac{n}{2}-1}u=0, \,\,\,\,\,\,\,\,\,\, \,\,\,  x\in\partial\mathbb{R}^{n}_+,
\end{cases}\end{equation}
where $u\in C^{n}(\mathbb{R}^{n}_+)\cap C^{n-2}(\overline{\mathbb{R}^{n}_+})$ and $n\geq2$ is even. We first consider the typical case $f(x,u)=|x|^{a}u^{p}$ with $0\leq a<\infty$ and $1<p<\infty$. We prove the super poly-harmonic properties and establish the equivalence between \eqref{NPDE0} and the corresponding integral equations
\begin{equation}\label{IE0}
	u(x)=\int_{\mathbb{R}^{n}_+}G(x,y)f(y,u(y))dy,
\end{equation}
where $G(x,y)$ denotes the Green's function for $(-\Delta)^{\frac{n}{2}}$ on $\mathbb{R}^{n}_{+}$ with Navier boundary conditions. Then, we establish Liouville theorem for \eqref{IE0} via ``the method of scaling spheres" developed initially in \cite{DQ0} by Dai and Qin, and hence we obtain the Liouville theorem for \eqref{NPDE0} on $\mathbb{R}^n_+$. As an application of the Liouville theorem on $\mathbb{R}^n_+$ (Theorem \ref{thm1}) and Liouville theorems in $\mathbb{R}^{n}$ established in Chen, Dai and Qin \cite{CDQ} for $n\geq4$ and Bidaut-V\'{e}ron and Giacomini \cite{BG} for $n=2$, we derive a priori estimates and existence of positive solutions to critical order Lane-Emden equations in bounded domains for all $n\geq2$ and $1<p<\infty$. Extensions to IEs and PDEs with general nonlinearities $f(x,u)$ are also included.
\end{abstract}
\maketitle {\small {\bf Keywords:} The method of scaling spheres; Critical order; H\'{e}non-Lane-Emden type equations; Liouville theorems; a priori estimates; Navier problems.\\

{\bf 2010 MSC} Primary: 35B53; Secondary: 35J30, 35J91.}

\section{Introduction}

\subsection{Liouville theorems on a half space $\mathbb{R}^{n}_{+}$}

In this paper, we first establish Liouville theorem for the following higher order H\'{e}non-Lane-Emden equations with Navier boundary condition:
\begin{equation}\label{NPDE}\\\begin{cases}
(-\Delta)^{\frac{n}{2}} u(x)=|x|^{a}u^{p}(x), \,\,\,\,\,\,\, u(x)\geq0, \,\,\,\,\,\,\,\, \,\,\, x\in\mathbb{R}^{n}_+, \\
u=(-\Delta)u=\cdots=(-\Delta)^{\frac{n}{2}-1}u=0 \,\,\,\,\,\,\,\,\,\,\, \text{on} \,\,\,  \partial\mathbb{R}^{n}_+,
\end{cases}\end{equation}
where $\mathbb{R}^{n}_{+}=\{x=(x_{1},\cdots,x_{n})\in\mathbb{R}^{n}\,|\,x_{n}>0\}$ is the upper half Euclidean space, $u\in C^{n}(\mathbb{R}^{n}_+)\cap C^{n-2}(\overline{\mathbb{R}^{n}_+})$, $n\geq2$ is even, $0\leq a<+\infty$ and $1<p<+\infty$.

For $0<\alpha\leq n$, PDEs of the form
\begin{equation}\label{GPDE}
(-\Delta)^{\frac{\alpha}{2}}u(x)=|x|^{a}u^{p}(x)
\end{equation}
are called the fractional order or higher order H\'{e}non, Lane-Emden, Hardy equations for $a>0$, $a=0$, $a<0$, respectively. These equations have numerous important applications in conformal geometry and Sobolev inequalities. In particular, in the case $a=0$, \eqref{GPDE} becomes the well-known Lane-Emden equation, which models many phenomena in mathematical physics and astrophysics. When $a>0$, equations of type \eqref{GPDE} was first proposed by H\'{e}non in \cite{Henon} when he studied rotating stellar structures.

We say equations \eqref{GPDE} have critical order if $\alpha=n$ and non-critical order if $0<\alpha<n$. Being essentially different from the non-critical order equations, the fundamental solution $c_{n}\ln\frac{1}{|x-y|}$ of $(-\Delta)^{\frac{n}{2}}$ changes its signs in critical order case $\alpha=n$. The nonlinear terms in \eqref{GPDE} are called critical if $p=p_{s}(a):=\frac{n+\alpha+2a}{n-\alpha}$ ($:=+\infty$ if $n=\alpha$) and subcritical if $0<p<p_{s}(a)$.

Liouville type theorems (i.e., nonexistence of nontrivial nonnegative solutions) and related properties for equations \eqref{GPDE} in the whole space $\mathbb{R}^n$ and the half space $\mathbb{R}^n_+$ have been extensively studied (see \cite{CDQ,CFL,CFY,CL,CLZ,Cowan,DPQ,DQ0,DQ1,DQ,DQZ,DZ,GS,Lin,Li,LB,M,MP,PS,RW,RW1,SX,WX} and the references therein). These Liouville theorems, in conjunction with the blowing up and re-scaling arguments, are crucial in establishing a priori estimates and hence existence of positive solutions to non-variational boundary value problems for a class of elliptic equations on bounded domains or on Riemannian manifolds with boundaries (see \cite{CDQ,CL4,DQ0,GS1,PQS}).

In the critical order case $\alpha=n$, Liouville theorems for \eqref{GPDE} have been established in \cite{BG} for $n=2$ and in \cite{CDQ} for $n\geq 4$ in whole space $\mathbb{R}^{n}$. For the special case $a=0$, their results can be concluded as the following theorem.
\begin{thm}\label{wthm}(\cite{BG,CDQ})
	Suppose $n\geq 2$ is an even integer, $1<p<+\infty$ and $u$ is a nonnegative classical solution of
	\begin{equation*}
	(-\Delta)^{\frac{n}{2}}u(x)=u^{p}(x), \qquad \forall \,\, x\in\mathbb{R}^{n}.
	\end{equation*}
	Then, we have $u\equiv 0$ in $\mathbb{R}^{n}$.
\end{thm}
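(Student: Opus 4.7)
The plan is to prove nonexistence in three stages: (i) establish the super poly-harmonic property of nonnegative classical solutions, (ii) convert the PDE into an equivalent integral equation on bounded balls and pass to the limit, and (iii) apply a scaling/symmetry-type argument (the method of scaling spheres developed in \cite{DQ0}) to the resulting integral equation to rule out nontrivial $u$. Throughout, set $m := n/2$ and $v_i := (-\Delta)^i u$ for $0 \leq i \leq m$, so that $v_0 = u$ and $v_m = u^p$.

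The first and, in my view, most delicate step is to show
\begin{equation*}
v_i(x) \geq 0 \qquad \text{for all } x \in \mathbb{R}^n \text{ and } i = 0, 1, \ldots, m-1.
\end{equation*}
I would argue by contradiction on the spherical averages $\bar v_i(r)$, which, since the Laplacian commutes with spherical means, satisfy the radial identities
\begin{equation*}
\frac{1}{r^{n-1}} \bigl(r^{n-1} \bar v_i'(r)\bigr)' = -\bar v_{i+1}(r), \qquad 0 \leq i \leq m-1,
\end{equation*}
culminating in $\bar v_m(r) = \overline{u^p}(r) \geq \bar u(r)^p$ by Jensen's inequality (here $p>1$). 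If some $\bar v_{i_0}(r_0)<0$, integrating these ODEs outward with the radial weight $r^{n-1}$ and using the nonnegativity of $\bar v_m$ propagates a sign defect down the chain and forces $\bar u(r)$ to become negative for large $r$, contradicting $u\geq 0$. This step is the main technical obstacle: the whole-space fundamental solution $c_n\log(1/|x-y|)$ of $(-\Delta)^{n/2}$ changes sign in the critical order case, so no global sign-preserving integral representation is available and every positivity statement must be extracted purely at the spherical-average/ODE level.

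With the super poly-harmonic inequalities in hand, I would next represent $u$ on each ball $B_R$ via the (Navier) Green's function of $(-\Delta)^m$, pass to the limit $R\to\infty$ using monotone convergence together with the nonnegativity of the $v_i$, and extract (a) the finite-integrability estimate $\int_{\mathbb{R}^n} |x|^{-\tau} u^p(x)\, dx < \infty$ for every $\tau>0$, and (b) an asymptotic upper bound on $u$ at infinity. Finally, the method of scaling spheres, analogous to its use in the half-space setting of the main body of the paper, is applied to this integral equation: comparing $u$ with its Kelvin-type rescaling $u_\lambda$ on each $B_\lambda$ and letting $\lambda$ grow produces a pointwise lower bound $u(x) \geq C_\gamma |x|^{-\gamma}$ for every $\gamma>0$, which is incompatible with the integrability in (a) unless $u\equiv 0$. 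The case $n=2$ renders the super poly-harmonic step vacuous, and the argument collapses to the classical one of Bidaut-V\'eron--Giacomini \cite{BG}; for $n\geq 4$ this is exactly the scheme of Chen-Dai-Qin \cite{CDQ}.
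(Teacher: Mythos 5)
Theorem \ref{wthm} is a cited result: the paper attributes it to Bidaut-V\'eron and Giacomini \cite{BG} for $n=2$ and to Chen, Dai and Qin \cite{CDQ} for $n\geq 4$, and offers no proof of its own, so there is strictly no in-paper argument to compare against. Your three-stage plan (super poly-harmonicity, conversion to an integral equation via the Green's function, a scaling-sphere contradiction) is the correct architecture and is the one used in \cite{CDQ}; it also mirrors the half-space argument the paper develops in Sections 2--4 for Theorem \ref{thm1}.

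There is, however, a genuine gap in the part you flag as "the main technical obstacle," the super poly-harmonic step. You assert that if some $\bar v_{i_0}(r_0)<0$ then integrating the radial ODE chain outward "propagates a sign defect down the chain and forces $\bar u(r)$ to become negative for large $r$." But each double integration of $\frac{1}{r^{n-1}}(r^{n-1}\bar v_i')'=-\bar v_{i+1}$ flips the sign of the leading polynomial term, so whether the dominant contribution to $\bar u(r)$ ends up negative depends on the parity of the number of steps down the chain. In the unfavorable parity the outcome is a \emph{positive} polynomial lower bound $\bar u(r)\gtrsim r^{2j}$, which is perfectly compatible with $u\geq 0$ and yields no contradiction on its own. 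Closing the argument there requires the nonlinearity: feed the growth of $\bar u$ back through $\overline{u^p}\geq (\bar u)^{p}$, exploit the scaling $u\mapsto \lambda^{n/(p-1)}u(\lambda\cdot)$ to normalize the Cauchy data, and run a bootstrap that amplifies the growth super-geometrically until a fixed value of $\bar u$ is forced to be infinite. This is precisely the case distinction and re-scaling/iteration carried out in the paper's own half-space analogue, Theorem \ref{Thm0}: Step~1 dispatches $n/2$ odd immediately via \eqref{2-9}--\eqref{2-10}, while the much longer \eqref{2-11}--\eqref{2-28} handles $n/2$ even. As written, your sentence proves the positivity of the $v_i$ only for half of the even integers $n$. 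A minor secondary point: at critical order the Kelvin transform carries no Jacobian weight (the conformal exponent $\frac{n-\alpha}{2}$ vanishes at $\alpha=n$), so the scaling-sphere output should be a \emph{growth} bound $u(x)\geq C_{\kappa}|x|^{\kappa}$ for every $\kappa>0$ (compare Theorem \ref{lower}), not the decay bound $u(x)\geq C_\gamma|x|^{-\gamma}$ you wrote; the contradiction is then with finiteness of $u$ at a single point against the logarithmic kernel, not with a weighted $L^1$ bound.
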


For $a=0$, there are also many works on the Liouville type theorems for Lane-Emden equations on half space $\mathbb{R}^{n}_{+}$, for instance, see \cite{CFL,CFY,CLZ,CLZC,DQ0,DQZ,LZ,RW,RW1} and the references therein. Reichel and Weth \cite{RW} proved Liouville theorem in the class of \emph{bounded} nonnegative solutions for Dirichlet problem of higher order Lane-Emden equations \eqref{GPDE} (i.e., $a=0$ and $\alpha=2m$ with $1\leq m<\frac{n}{2}$) on $\mathbb{R}^{n}_{+}$ in the cases $1<p\leq\frac{n+2m}{n-2m}$, subsequently they also derived in \cite{RW1} Liouville theorem for general nonnegative solutions in the cases $1<p<\frac{n+2m}{n-2m}$. In \cite{CFL}, Chen, Fang and Li established Liouville theorem for Navier problem of Lane-Emden equation \eqref{GPDE} on $\mathbb{R}^{n}_{+}$ in the higher order cases $\alpha=2m$ with $1\leq m<\frac{n}{2}$ and $\frac{n}{n-2m}<p\leq\frac{n+2m}{n-2m}$. In a recent work \cite{DQ0}, Dai and Qin developed the method of scaling spheres, which is essentially a frozen variant of the method of moving spheres initially used by Chen and Li \cite{CL1}, Li and Zhu \cite{LZ} and Padilla \cite{P} and becomes a powerful tool in deriving asymptotic estimates for solutions. As one of many immediate applications, they established in \cite{DQ0} the Liouville theorem for non-critical higher order H\'{e}non-Lane-Emden type IEs and Lane-Emden type PDEs with Navier boundary conditions on $\mathbb{R}^{n}_{+}$ for all $1<p\leq\frac{n+2m}{n-2m}$.

For Liouville theorem on $\mathbb{R}^{n}_{+}$, the cases $a\neq0$ have not been fully understood. For $a\geq0$, by using the method of scaling spheres developed initially in \cite{DQ0}, Dai, Qin and Zhang \cite{DQZ} proved the Liouville theorem for non-critical higher order H\'{e}non equations \eqref{GPDE} (i.e., $\alpha=2m$ with $1\leq m<\frac{n}{2}$) with Navier boundary conditions on $\mathbb{R}^{n}_{+}$ in the cases $1<p<\frac{n+2m+2a}{n-2m}$.

In this paper, by applying the method of scaling spheres in integral forms, we will establish Liouville theorem for the Navier problem of critical order H\'{e}non-Lane-Emden equation \eqref{NPDE} on $\mathbb{R}^{n}_{+}$ in all the cases that $a\geq 0$, $n\geq2$ and $1<p<+\infty$.

It's well known that the super poly-harmonic properties of solutions are crucial in establishing Liouville type theorems and the integral representation formulae for higher order or fractional order PDEs (see e.g. \cite{CDQ,CF,CFL,DPQ,DQZ,WX}). In order to prove the equivalence between PDE \eqref{NPDE} and corresponding integral equation, we will first prove the following generalized theorem on super poly-harmonic properties, namely, we allow $-n<a<0$ and assume that $u\in C^{n}(\mathbb{R}^{n}_+)\cap C^{n-2}(\overline{\mathbb{R}^{n}_+})$ if $-n<a<0$.
\begin{thm}\label{Thm0}(Super poly-harmonic properties)
Assume $n\geq4$ is even, $-n<a<+\infty$, $1<p<+\infty$ and $u$ is a nonnegative solution of \eqref{NPDE}. If one of the following two assumptions
\begin{equation*}
	a\geq-2p-2 \,\,\,\,\,\,\,\,\,\,\,\, \text{or} \,\,\,\,\,\,\,\,\,\,\,\, u(x)=o(|x|^{2}) \,\,\,\, \text{as} \,\, |x|\rightarrow+\infty
\end{equation*}
holds, then
\begin{equation*}
  (-\Delta)^{i}u(x)\geq0
\end{equation*}
for every $i=1,2,\cdots,\frac{n}{2}-1$ and all $x\in\overline{\mathbb{R}^{n}_+}$.
\end{thm}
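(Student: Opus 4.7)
My plan is a downward induction on $i$: show that $u_i := (-\Delta)^i u \ge 0$ on $\overline{\mathbb{R}^n_+}$ for $i = \tfrac{n}{2}-1, \tfrac{n}{2}-2, \ldots, 1$. Before starting the induction I would record a regularity fact that the Navier cascade forces $\partial^2_{x_n} u_i = 0$ on $\partial\mathbb{R}^n_+$ (since both $\Delta_{x'} u_i$ and $u_{i+1} = -\Delta u_i$ vanish there). This allows me to extend each $u_i$ by odd reflection to $\tilde u_i \in C^2(\mathbb{R}^n)$, with $-\Delta \tilde u_i = \tilde u_{i+1}$ holding pointwise on $\mathbb{R}^n$ and the top equation reading $(-\Delta)^{n/2}\tilde u = \mathrm{sgn}(x_n)\,|x|^a|\tilde u|^{p-1}\tilde u$.

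The workhorse is a contradiction argument using a cascade of spherical-average ODE inequalities in the spirit of Chen--Dai--Qin \cite{CDQ} and Wei--Xu \cite{WX}. Suppose for contradiction that $k \in \{1, \ldots, \tfrac{n}{2}-1\}$ is the largest index with $u_k(x^\ast) < 0$ at some $x^\ast \in \mathbb{R}^n_+$. Setting $\bar u_i(r) := \frac{1}{|\partial B_r(x^\ast)|}\int_{\partial B_r(x^\ast)} \tilde u_i \, dS$, polar coordinates yield the chain
\begin{equation*}
-\bar u_i''(r) - \frac{n-1}{r}\bar u_i'(r) = \bar u_{i+1}(r), \qquad 0 \le i \le \tfrac{n}{2}-2,
\end{equation*}
together with a Jensen-type lower bound $-\bar u_{n/2-1}''(r) - \frac{n-1}{r}\bar u_{n/2-1}'(r) \ge c(x^\ast, r)\,\bar u_0^p(r)$ valid on $r < \mathrm{dist}(x^\ast, \partial\mathbb{R}^n_+)$, where $c(x^\ast, r) > 0$ is a radial lower bound on $|x|^a$. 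Starting from $\bar u_k(0^+) = u_k(x^\ast) < 0$ and iteratively integrating the chain downward against the weight $r^{n-1}$, one propagates through the $\bar u_i$'s a polynomial (and ultimately, after feeding back through the forcing, superpolynomial) growth/decay pattern culminating in $\bar u_0$ becoming incompatible with $u \ge 0$ combined with the prescribed growth of $u$ at infinity.

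The principal obstacle I foresee is handling the averages past the ``shadow radius'' $R_\ast := \mathrm{dist}(x^\ast, \partial\mathbb{R}^n_+)$, beyond which $\partial B_r(x^\ast)$ crosses $\partial\mathbb{R}^n_+$ and odd reflection induces partial cancellation between the positive contribution from $\mathbb{R}^n_+$ and the negative contribution from $\mathbb{R}^n_-$. One must show, using the Navier conditions and a Hopf-type control at the boundary, that this cancellation is quantitatively limited. The two alternative hypotheses of the theorem enter at this final stage: the condition $a \ge -2p-2$ supplies enough integrability of the forcing $|x|^a u^p$ to drive the cascade to its contradiction directly, whereas the growth bound $u(x) = o(|x|^2)$ provides an external obstruction against the negative polynomial growth of $\bar u_0$, covering the complementary range $-n < a < -2p-2$. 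A subsidiary subtlety is the singularity of $|x|^a$ at the origin when $-n < a < 0$, which would be handled by restricting the weighted Jensen step to $\{|x|\ge 1\}$ and treating the unit ball separately via a classical boundary maximum-principle argument.
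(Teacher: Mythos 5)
Your general blueprint---odd extension, spherical averages centered at a bad point, a cascade of radial ODE inequalities, and a final contradiction via Jensen---does match the structure of the paper's proof, but several of the steps you label as obstacles or leave as sketches are either non-issues or, conversely, the genuine crux that your outline does not supply. The ``shadow radius'' worry is misplaced: once $u$ and each $v_i=(-\Delta)^i u$ are extended oddly across $\partial\mathbb{R}^n_+$, the extended equation is simply $(-\Delta)^{n/2}\tilde u = |x|^a |\tilde u|^{p-1}\tilde u$ on all of $\mathbb{R}^n$ (no $\mathrm{sgn}(x_n)$ factor---your formula has a sign error, since both sides are odd and the signs cancel), and for any center $x_0\in\mathbb{R}^n_+$ more than half of $B_r(x_0)$ lies in $\mathbb{R}^n_+$, so $\int_{B_r(x_0)}\tilde u_i\,dx\ge 0$ holds for \emph{all} $r>0$ by odd symmetry alone. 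No Hopf-type boundary control is needed, and the spherical-average chain is used on the full range of $r$.

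The real gap in your proposal is the base case $i=\frac{n}{2}-1$. Running the cascade downward from $\overline{v_{n/2-1}}(0)<0$ gives $\bar u$ a polynomial profile of sign $(-1)^{n/2}$; when $\frac{n}{2}$ is odd this forces $\int_{B_r(x_0)}\tilde u<0$ for large $r$, a contradiction, but when $\frac{n}{2}$ is even the leading coefficient is \emph{positive} and there is no immediate sign obstruction. The paper closes this case with a nontrivial device you do not mention: re-scale via $u_\lambda(x)=\lambda^{(n+a)/(p-1)}u(\lambda x)$, decompose $\bar u_\lambda$ into a forced part and a free polynomial $\phi_{0,\lambda}$, observe that $\frac{n+a}{p-1}+n-2>0$ makes the coefficient $w_{n/2-1}(0)\lambda^{n-2}r^{n-2}$ dominate for $\lambda$ large, and then run a Moser-type iteration $\bar u_\lambda(r)\ge a_k(r-1)^{\sigma_k}$ on $[1,2]$ with $\sigma_{k+1}=2p\sigma_k$ and $a_{k+1}=C_0a_k^p/(2\sigma_k p)^{n-1+p}$, choosing $a_0$ so that $a_k\to\infty$. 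Your phrase ``superpolynomial growth pattern culminating in incompatibility'' does not substitute for this mechanism. Finally, the role of $a\ge -2p-2$ is not integrability of the forcing $|x|^a u^p$; it is used in Step~2 to guarantee that $\int^r s^{2(n/2-i)p+a+1}\,ds$ diverges at infinity (the worst case being $i=\frac n2-1$, i.e.\ $2p+a+2\ge 0$), which drives $\overline{v_{n/2-1}}(r)\to-\infty$ and contradicts the already-established $v_{n/2-1}\ge 0$. The alternative hypothesis $u(x)=o(|x|^2)$ is used there only to contradict the lower bound $\bar u(r)\ge c_0 r^{2(n/2-i)}$ when $\frac{n}{2}-i$ is odd, again not as a growth bound on the far-field behavior of the forcing.
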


Based on the above super poly-harmonic properties, we can deduce the equivalence between PDE \eqref{NPDE} and the following integral equation
\begin{equation}\label{IE}
	u(x)=\int_{\mathbb{R}^{n}_{+}}G^{+}(x,y)|y|^{a} u^{p}(y)dy,
\end{equation}
where
\begin{equation}\label{Green}
  G^{+}(x,y):=C_{n}\left(\ln{\frac{1}{|x-y|}}-\ln{\frac{1}{|\bar{x}-y|}}\right)
\end{equation}
denotes the Green's function for $(-\Delta)^{\frac{n}{2}}$ on $\mathbb{R}^{n}_{+}$ with Navier boundary conditions, and $\bar{x}:=(x_{1},\cdots,-x_{n})$ is the reflection of $x$ with respect to the boundary $\partial\mathbb{R}^{n}_{+}$. That is, we have the following theorem.
\begin{thm}\label{equivalence}
If u is a nonnegative classical solution of \eqref{NPDE}, then $u$ is also a nonnegative solution of integral equation \eqref{IE}, and vice versa.
\end{thm}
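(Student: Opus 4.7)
The plan is to prove the two implications separately. The reverse direction \eqref{IE} $\Rightarrow$ \eqref{NPDE} is the routine half: I would apply $(-\Delta)^{\frac{n}{2}}$ to both sides of \eqref{IE} and invoke the defining properties of the critical-order Green's function $G^{+}$, namely $(-\Delta)^{\frac{n}{2}}_{x}G^{+}(\cdot,y)=\delta_{y}$ in $\mathbb{R}^{n}_{+}$ and $(-\Delta)^{i}_{x}G^{+}(x,y)=0$ for $x\in\partial\mathbb{R}^{n}_{+}$ with $i=0,\ldots,\frac{n}{2}-1$; this immediately recovers the PDE and the Navier boundary conditions, and the $C^{n}\cap C^{n-2}$-regularity of $u$ is extracted from standard elliptic theory applied to $-\Delta$ one factor at a time. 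The bulk of the work is the forward direction, and my strategy there is to invert one factor of $-\Delta$ at a time, exploiting the super poly-harmonic properties guaranteed by Theorem \ref{Thm0}.

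For the forward direction, I set $w_{i}:=(-\Delta)^{i}u$ for $i=0,\ldots,\frac{n}{2}$, so that $w_{0}=u$, $w_{n/2}=f:=|x|^{a}u^{p}$, and Theorem \ref{Thm0} forces every $w_{i}$ to be nonnegative on $\overline{\mathbb{R}^{n}_{+}}$ with $w_{i}\equiv 0$ on $\partial\mathbb{R}^{n}_{+}$ for $i=0,\ldots,\frac{n}{2}-1$ (the case $n=2$ is degenerate with only $w_{0}$). Let $G^{+}_{R}$ denote the Dirichlet Green's function of $-\Delta$ on the half-ball $B_{R}^{+}:=B_{R}(0)\cap\mathbb{R}^{n}_{+}$ and $G^{+}_{1}$ its counterpart on $\mathbb{R}^{n}_{+}$. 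At each level $i$, the auxiliary function $\phi_{R}^{(i)}(x):=\int_{B_{R}^{+}}G^{+}_{R}(x,y)w_{i+1}(y)\,dy$ solves $-\Delta\phi_{R}^{(i)}=w_{i+1}$ in $B_{R}^{+}$ with vanishing boundary values, so the maximum principle applied to $w_{i}-\phi_{R}^{(i)}$ (harmonic in $B_{R}^{+}$, nonnegative on $\partial B_{R}^{+}$) gives $w_{i}\geq\phi_{R}^{(i)}$ in $B_{R}^{+}$. Letting $R\to\infty$ and invoking the monotone convergence $G^{+}_{R}\nearrow G^{+}_{1}$ produces the pointwise bound $w_{i}(x)\geq\phi_{\infty}^{(i)}(x):=\int_{\mathbb{R}^{n}_{+}}G^{+}_{1}(x,y)w_{i+1}(y)\,dy$; in particular $\phi_{\infty}^{(i)}$ is finite. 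Since $w_{i}-\phi_{\infty}^{(i)}$ is nonnegative, harmonic in $\mathbb{R}^{n}_{+}$, and vanishes on $\partial\mathbb{R}^{n}_{+}$, the classical classification of such half-space harmonic functions forces $w_{i}-\phi_{\infty}^{(i)}=c_{i}x_{n}$ for some constant $c_{i}\geq 0$.

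The main obstacle is showing that every $c_{i}$ vanishes. For $i\in\{1,\ldots,\frac{n}{2}-1\}$ I would run a downward induction: feeding the representation $w_{i}=\phi_{\infty}^{(i)}+c_{i}y_{n}$ into the finite bound $\int G^{+}_{1}(x,y)w_{i}(y)\,dy\leq w_{i-1}(x)<\infty$ forces $c_{i}\int G^{+}_{1}(x,y)y_{n}\,dy<\infty$, but a direct expansion of the reflection formula $G^{+}_{1}(x,y)=c'_{n}\bigl(|x-y|^{2-n}-|\bar{x}-y|^{2-n}\bigr)$ gives the far-field asymptotic $G^{+}_{1}(x,y)\sim 2(n-2)c'_{n}x_{n}y_{n}/|y|^{n}$ as $|y|\to\infty$, which makes $\int G^{+}_{1}(x,y)y_{n}\,dy=+\infty$, so $c_{i}=0$. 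Once $c_{1},\ldots,c_{n/2-1}$ are eliminated, Fubini (justified by nonnegativity) combined with the iterated-convolution identity $G^{+}(x,y)=\int G^{+}_{1}(x,z_{1})G^{+}_{1}(z_{1},z_{2})\cdots G^{+}_{1}(z_{n/2-1},y)\,dz_{1}\cdots dz_{n/2-1}$ telescopes the tower into $u(x)=\int_{\mathbb{R}^{n}_{+}}G^{+}(x,y)f(y)\,dy+c_{0}x_{n}$. To kill $c_{0}$ I would argue by contradiction: if $c_{0}>0$ then $u(y)\geq c_{0}y_{n}$, whence $f(y)\geq c_{0}^{p}|y|^{a}y_{n}^{p}$, and the far-field asymptotic $G^{+}(x,y)\sim 2c_{n}x_{n}y_{n}/|y|^{2}$ together with a polar-coordinate estimate shows $\int G^{+}(x,y)|y|^{a}y_{n}^{p}\,dy=+\infty$ for any $a\geq 0$ and $p>1$, contradicting the finiteness of $u(x)-c_{0}x_{n}$. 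The essential technical difficulty is precisely this Green's function asymptotic bookkeeping together with the justification of monotone convergence and Fubini at each level of the iteration.
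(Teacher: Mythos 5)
Your proof takes a genuinely different route from the paper for the hard (forward) direction, but it is built on the same skeleton: exploit Theorem \ref{Thm0}, decompose into a second-order system, use the Green's function of $-\Delta$ on the half ball, and telescope. The paper's proof (Section 3) keeps the boundary terms from the integration by parts against $G_R(x,y,2)$ explicit, shows via \eqref{3-5} that the hemisphere contribution is nonpositive, and then extracts a sequence $R_j\to\infty$ along which the boundary terms vanish by combining the integrability \eqref{3-9} with the pointwise asymptotics \eqref{3-5-1}--\eqref{3-5-2}. You instead apply the maximum principle to get $w_i\geq\phi_R^{(i)}$, send $R\to\infty$ by monotone convergence, classify the nonnegative harmonic remainder on $\mathbb{R}^n_+$ vanishing continuously on the boundary as $c_ix_n$ (a Herglotz/Phragm\'{e}n-Lindel\"{o}f type fact, correct but worth citing), and eliminate each $c_i$ by a divergence argument. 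Your elimination is sound: $\int G(x,y,2)\,y_n\,dy$ diverges because the integrand decays only like $y_n^2/|y|^n$, killing $c_i$ for $i\geq1$; and for $c_0$, the lower bound $u\geq c_0y_n$ would force $\int G^+(x,y)|y|^a y_n^p\,dy$ to diverge (decay like $y_n^{p+1}|y|^{a-2}$). This is cleaner in that it sidesteps the boundary-term bookkeeping and the passage to a subsequence, but it imports the classification of half-space harmonic functions as a black box and needs a justification (easily supplied) that $-\Delta\phi_\infty^{(i)}=w_{i+1}$ after the monotone-convergence passage.

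Two points deserve flagging. First, you treat the iterated-convolution identity $G^+(x,y)=\int G(x,z_1,2)\cdots G(z_{n/2-1},y,2)\,dz$ as known; the non-critical steps \eqref{3-6} are in \cite{CFL}, but the final, critical-order step is precisely Lemma \ref{lem3-0} of the paper, whose proof occupies half the section and involves a logarithmic Green's-function boundary estimate \eqref{3-6-5}. Without that lemma your telescoping collapses at the last step, so it must be proved, not assumed. Second, your reverse direction is stated as routine, which agrees with the paper's (implicit) stance, but "$(-\Delta)^{n/2}_xG^+(\cdot,y)=\delta_y$" should really be unpacked through the same convolution structure: apply $-\Delta$ once to kill one $G(\cdot,\cdot,2)$ factor and check that the resulting integral still makes classical sense, iterating $n/2$ times. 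With those two gaps filled, the argument is correct and offers a legitimately different (and arguably more conceptual) path to Theorem \ref{equivalence}.
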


Next, we consider the integral equations \eqref{IE} instead of PDE \eqref{NPDE}. We will study the integral equation \eqref{IE} via the method of scaling spheres in integral forms developed by Dai and Qin in \cite{DQ0}. Our Liouville type result for IE \eqref{IE} is the following theorem.
\begin{thm}\label{thmIE}
Assume $n\geq 1$, $1\leq p<+\infty$ and $-n<a<+\infty$. If $u\in C(\overline{\mathbb{R}^{n}_{+}})$ is a nonnegative solution to \eqref{IE}, then $u\equiv 0$.
\end{thm}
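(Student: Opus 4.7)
The plan is to argue by contradiction via the method of scaling spheres in integral form developed in \cite{DQ0}, adapted to the critical-order half-space kernel $G^+$. Suppose $u\not\equiv 0$. Since $|\bar x-y|^2-|x-y|^2=4x_n y_n>0$ for every $x,y\in\mathbb R^n_+$, the kernel $G^+(x,y)$ is strictly positive on $\mathbb R^n_+\times\mathbb R^n_+$, so \eqref{IE} forces $u>0$ throughout $\mathbb R^n_+$. The expansion
$$G^+(x,y)=\frac{C_n}{2}\ln\!\left(1+\frac{4x_n y_n}{|x-y|^2}\right)\gtrsim \frac{x_n y_n}{|x|^2},$$
valid for $y$ in a fixed bounded set and $|x|$ large, combined with the choice of a bounded set $B^+_{R_0}:=B_{R_0}(0)\cap\mathbb R^n_+$ on which $|y|^au^p(y)$ has strictly positive integral (integrability near $y=0$ being ensured by $-n<a$), yields the preliminary lower bound $u(x)\geq C_0\,x_n/|x|^2$ for $|x|\geq 2R_0$.

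The core is a scaling-spheres argument about the origin. For each $\lambda>0$, introduce the inversion $x^\lambda:=\lambda^2 x/|x|^2$ and a weighted Kelvin-type transform $u_\lambda(x):=(\lambda/|x|)^{\theta}u(x^\lambda)$, where $\theta$ is chosen so that \eqref{IE}, together with the conformal identities $\overline{x^\lambda}=(\bar x)^\lambda$, $|x^\lambda-y^\lambda|=\lambda^2|x-y|/(|x||y|)$, and $G^+(x^\lambda,y^\lambda)=G^+(x,y)$, yields a clean linear integral inequality for $w_\lambda:=u_\lambda-u$ on the exterior region $\Sigma_\lambda^c:=\{x\in\mathbb R^n_+:|x|>\lambda\}$, whose kernel is nonnegative there. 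The preliminary lower bound ensures $w_\lambda\leq 0$ on $\Sigma_\lambda^c$ for all sufficiently small $\lambda>0$. Defining $\lambda^\ast:=\sup\{\lambda>0:w_\mu\leq 0\text{ on }\Sigma_\mu^c\text{ for all }\mu\leq\lambda\}$ and applying a strict-monotonicity argument to the linear integral inequality, I would rule out $\lambda^\ast<\infty$ and conclude $u_\lambda\leq u$ on $\Sigma_\lambda^c$ for every $\lambda>0$.

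The global comparison is then fed back into \eqref{IE} to bootstrap the growth of $u$ at infinity, yielding a sequence of lower bounds $u(x)\geq C_k\,x_n\,|x|^{\beta_k-1}$ on a fixed cone $\{x_n\geq|x|/2\}$ with $\beta_k\to+\infty$. For $k$ large enough, $G^+(x,y)|y|^a u^p(y)$ is no longer integrable at infinity, so \eqref{IE} evaluated at any fixed $x\in\mathbb R^n_+$ gives $u(x)=+\infty$, contradicting $u\in C(\overline{\mathbb R^n_+})$. Hence $u\equiv 0$.

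The principal obstacle is the scaling-spheres step. In the non-critical case of \cite{DQ0} the weighted Kelvin transform is covariant under the Riesz kernel, so the transformed equation preserves the structure of the original. Here, by contrast, the critical-order kernel $G^+$ is \emph{invariant} (rather than covariant) under inversion, which forces a more delicate choice of the weight $\theta$ and a careful treatment of the resulting inhomogeneous integrand in order to secure the correct sign of the kernel in the $w_\lambda$-inequality on $\Sigma_\lambda^c$ throughout the whole range $-n<a<\infty$, $1\leq p<\infty$. Once the scaling-spheres comparison is in hand, the subsequent bootstrap follows standard lines, with the additional verification that the exponents $\beta_k$ genuinely diverge rather than saturate at some finite ceiling.
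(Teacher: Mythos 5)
Your high-level plan (scaling spheres in integral form, then a bootstrap to a divergent lower bound) matches the paper's strategy, but you leave the crux of the argument unresolved and you guess wrong about how to resolve it. You propose a weighted Kelvin transform $u_\lambda(x)=(\lambda/|x|)^\theta u(x^\lambda)$ and state that the ``delicate choice of $\theta$'' caused by the conformal \emph{invariance} of $G^+$ is the principal obstacle. The paper's key observation is precisely that in the critical-order case the correct choice is $\theta=0$, i.e.\ the \emph{unweighted} transform $u_\lambda(x)=u(\lambda^2 x/|x|^2)$. Because $|x^\lambda-y|=\frac{|y|}{|x|}|x-y^\lambda|$, $|x|=|\bar x|$, and $\overline{x^\lambda}=\bar x^{\,\lambda}$, the additive $\ln|x|-\ln|y|$ terms in $\ln\frac{1}{|x^\lambda-y|}-\ln\frac{1}{|\overline{x^\lambda}-y|}$ cancel and one gets $G^+(x^\lambda,y)=G^+(x,y^\lambda)$ directly, hence
$u_\lambda(x)=\int_{\mathbb{R}^n_+}G^+(x,y^\lambda)|y|^a u^p(y)\,dy$
with no extraneous prefactor. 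Any $\theta\neq0$ introduces a factor $(\lambda/|x|)^\theta$ outside the integral that varies with $x$ and destroys the comparison with $u(x)=\int G^+(x,y)|y|^a u^p(y)\,dy$. After the change of variable $y\mapsto y^\lambda$ the Jacobian and the $|y|^a$ weight combine into $(\lambda/|y|)^{2(n+a)}$, which is the quantity that actually encodes subcriticality (it is $>1$ on $B^+_\lambda$ because $n+a>0$) and powers both the strict-sign argument at $\lambda_0$ and, via $\lambda=\sqrt{|x|}$, a constant (not decaying) lower bound on a cone.

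Two further points differ and matter. First, the paper runs the comparison on the \emph{interior} $B^+_\lambda(0)$ with $\omega^\lambda=u_\lambda-u\geq0$, not on the exterior $\Sigma^c_\lambda$ with $w_\lambda\leq0$; the interior formulation avoids any need to control $u$ near the boundary $\partial\mathbb{R}^n_+$ at infinity. Second, the starting step ($\omega^\lambda\geq0$ for small $\lambda$) is not obtained from a pointwise lower bound like $u\gtrsim x_n/|x|^2$; the paper proves it by a contraction estimate: using $\ln(1+t)\lesssim t^\varepsilon$ to split $G^+$ into a short-range Riesz-type piece and a bounded piece, applying Hardy--Littlewood--Sobolev and H\"older on the negative set $(B^+_\lambda)^-$, and showing $\|\omega^\lambda\|_{L^q((B^+_\lambda)^-)}\leq \tfrac12\|\omega^\lambda\|_{L^q((B^+_\lambda)^-)}$ for $\lambda$ small. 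Your sketch of ``a strict-monotonicity argument'' to rule out $\lambda^\ast<\infty$ is in spirit correct but omits the two-case dichotomy ($\omega^{\lambda_0}\equiv0$ versus $\omega^{\lambda_0}\not\equiv0$) and the fact that $\omega^{\lambda_0}\equiv0$ is itself excluded by the strict factor $(\lambda_0/|y|)^{2(n+a)}-1>0$. As a result, the proposal as written has a genuine hole exactly at the step you identify as the main obstacle, and filling it requires discarding the weighted transform rather than tuning $\theta$.
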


\begin{rem}
Note that we do not assume $n$ to be even in Theorem \ref{thmIE}. It is also unexpected that the above Theorem \ref{thmIE} still holds for $n=1$. One can see clearly from the proof that the assumption $u\in C(\overline{\mathbb{R}^{n}_{+}})$ in Theorem \ref{thmIE} can be weaken into $|x|^{a}u^{p-1}\in L^{1+\delta}_{loc}(\overline{\mathbb{R}^{n}_{+}})$ for some small $\delta>0$.
\end{rem}

As a consequence of Theorem \ref{equivalence} and \ref{thmIE}, we obtain immediately the following Liouville type theorem on PDE \eqref{NPDE}.
\begin{thm}\label{thm1}
Assume $n\geq2$ is even, $0\leq a<+\infty$ and $1<p<+\infty$. Suppose $u\in C^{n}({\mathbb{R}^{n}_+})\cap C^{n-2}(\overline{\mathbb{R}^{n}_+})$ is a nonnegative classical solution to \eqref{NPDE}, then $u\equiv 0$.
\end{thm}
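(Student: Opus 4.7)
The plan is to deduce Theorem~\ref{thm1} as a clean consequence of the two deeper results already in place, namely Theorem~\ref{equivalence} (equivalence between the PDE \eqref{NPDE} and the integral equation \eqref{IE}) and Theorem~\ref{thmIE} (Liouville property for \eqref{IE}). Given a nonnegative classical solution $u\in C^{n}(\mathbb{R}^{n}_{+})\cap C^{n-2}(\overline{\mathbb{R}^{n}_{+}})$ of \eqref{NPDE} with $0\leq a<+\infty$ and $1<p<+\infty$, I first check that the parameter regime sits inside the hypotheses of the earlier theorems. Since $a\geq 0 > -2p-2$, the first alternative in the hypothesis of Theorem~\ref{Thm0} holds automatically, so when $n\geq 4$ the super poly-harmonic inequalities $(-\Delta)^{i}u\geq 0$ for every $1\leq i\leq \tfrac{n}{2}-1$ are available on $\overline{\mathbb{R}^{n}_{+}}$; in the case $n=2$ no intermediate positivity is needed and \eqref{NPDE} is already a second order Dirichlet problem.

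Next, I invoke Theorem~\ref{equivalence} to rewrite $u$ as a Green's function integral
\[
u(x)=\int_{\mathbb{R}^{n}_{+}} G^{+}(x,y)\,|y|^{a}u^{p}(y)\,dy,\qquad x\in\overline{\mathbb{R}^{n}_{+}},
\]
with $G^{+}$ given by \eqref{Green}. Because $u\in C(\overline{\mathbb{R}^{n}_{+}})$ (a consequence of the assumed classical regularity), and the exponents satisfy $1<p<+\infty$ together with $-n<0\leq a<+\infty$, the assumptions of Theorem~\ref{thmIE} are met, and a single application of that theorem forces $u\equiv 0$ on $\overline{\mathbb{R}^{n}_{+}}$.

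The genuine work has already been absorbed into the three preceding theorems: the method-of-scaling-spheres Liouville theorem for the integral equation in Theorem~\ref{thmIE}, the passage from PDE to IE in Theorem~\ref{equivalence}, and the super poly-harmonic properties in Theorem~\ref{Thm0}. Consequently there is no substantial obstacle specific to Theorem~\ref{thm1} itself; the only point deserving attention is the bookkeeping verification that the parameter box of Theorem~\ref{thm1}---namely $n\geq 2$ even, $0\leq a<+\infty$, and $1<p<+\infty$---fits inside the (slightly wider) hypotheses of Theorems~\ref{Thm0}, \ref{equivalence} and \ref{thmIE}, and that the classical regularity assumption propagates to the continuity $u\in C(\overline{\mathbb{R}^{n}_{+}})$ demanded by Theorem~\ref{thmIE}.
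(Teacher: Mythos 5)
Your proposal is exactly the paper's own argument: the authors state Theorem \ref{thm1} as an immediate consequence of Theorem \ref{equivalence} (PDE--IE equivalence, which in turn rests on the super poly-harmonic properties of Theorem \ref{Thm0} when $n\geq4$) combined with the Liouville theorem for the integral equation, Theorem \ref{thmIE}. Your bookkeeping of the parameter ranges ($a\geq0>-2p-2$ and $a\geq0>-n$) and of the regularity ($u\in C^{n-2}(\overline{\mathbb{R}^{n}_+})\subset C(\overline{\mathbb{R}^{n}_+})$) matches what the paper implicitly relies on, so the proof is correct and essentially identical.
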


\subsection{A priori estimates and existence of positive solutions in bounded domains}

As an immediate application of the Liouville theorems (Theorem \ref{wthm} for $\mathbb{R}^{n}$ and Theorem \ref{thm1} for $\mathbb{R}^{n}_{+}$), we can derive a priori estimates and existence of positive solutions to critical order Lane-Emden equations in bounded domains $\Omega$ for all $1<p<+\infty$.

In general, let the critical order uniformly elliptic operator $L$ be defined by
\begin{eqnarray}\label{0-0c}
  L &:=& \left(\sum_{i,j=1}^{n}a_{ij}(x)\frac{\partial^{2}}{\partial x_{i}\partial x_{j}}\right)^{\frac{n}{2}}+\sum_{|\beta|\leq n-1}b_{\beta}(x)D^{\beta} \\
 \nonumber &=:& A^{\frac{n}{2}}+\sum_{|\beta|\leq n-1}b_{\beta}(x)D^{\beta},
\end{eqnarray}
where $n\geq2$ is even and the coefficients $b_{\beta}\in L^{\infty}(\Omega)$ and $a_{ij}\in C^{n-2}(\overline{\Omega})$ such that there exists constant $\tau>0$ with
\begin{equation}\label{0-2c}
  \tau|\xi|^{2}\leq\sum_{i,j=1}^{n}a_{ij}(x)\xi_{i}\xi_{j}\leq\tau^{-1}|\xi|^{2}, \,\,\,\,\,\,\,\,\,\, \forall \,\, \xi\in\mathbb{R}^{n}, \,\, x\in\Omega.
\end{equation}
Consider the Navier boundary value problem:
\begin{equation}\label{PDE-N}\\\begin{cases}
Lu(x)=f(x,u), \,\,\,\,\,\,\,\,\,\,\,\,\,\,\,\, x\in\Omega, \\
u(x)=Au(x)=\cdots=A^{\frac{n}{2}-1}u(x)=0, \,\,\,\,\,\,\,\, x\in\partial\Omega,
\end{cases}\end{equation}
where $n\geq2$ is even, $u\in C^{n}(\Omega)\cap C^{n-2}(\overline{\Omega})$ and $\Omega$ is a bounded domain with boundary $\partial\Omega\in C^{n-2}$.

By virtue of the Liouville theorem in $\mathbb{R}^{n}$ established in \cite{BG,CDQ} (see also Theorem \ref{wthm}) and Liouville theorem in $\mathbb{R}^{n}_{+}$ (Theorem \ref{thm1}), using entirely similar blowing-up and re-scaling methods as in the proof of Theorem 6 in Chen, Fang and Li \cite{CFL}, we can derive the following a priori estimate for classical solutions (possibly sign-changing solutions) to the critical order Navier problem \eqref{PDE-N} in the full range $1<p<+\infty$.
\begin{thm}\label{Thm4c}
Assume $n\geq2$ is even, $1<p<+\infty$ and there exist positive, continuous functions $h(x)$ and $k(x)$: $\overline{\Omega}\rightarrow(0,+\infty)$ such that
\begin{equation}\label{0-3c}
  \lim_{s\rightarrow+\infty}\frac{f(x,s)}{s^{p}}=h(x), \,\,\,\,\,\,\,\,\,\,\,\, \lim_{s\rightarrow-\infty}\frac{f(x,s)}{|s|^{p}}=k(x)
\end{equation}
uniformly with respect to $x\in\overline{\Omega}$. Then there exists a constant $C>0$ depending only on $\Omega$, $n$, $p$, $h(x)$, $k(x)$, such that
\begin{equation}\label{0-4c}
  \|u\|_{L^{\infty}(\overline{\Omega})}\leq C
\end{equation}
for every classical solution $u$ of Navier problem \eqref{PDE-N}.
\end{thm}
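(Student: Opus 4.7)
The plan is to argue by contradiction via the Gidas--Spruck style blow-up and rescaling method, following essentially the template of \cite[Theorem 6]{CFL} but now inserting the critical order Liouville theorems (Theorem \ref{wthm} on $\mathbb{R}^n$ and Theorem \ref{thm1} on $\mathbb{R}^n_+$) in place of the subcritical ones used there.

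Suppose to the contrary that there exists a sequence $\{u_k\}$ of classical solutions of \eqref{PDE-N} with $M_k := \|u_k\|_{L^\infty(\overline{\Omega})} \to +\infty$, and pick $x_k \in \overline{\Omega}$ with $|u_k(x_k)| = M_k$. The natural blow-up scale is $\lambda_k := M_k^{-(p-1)/n}$, chosen so that $\lambda_k^n M_k^{p-1} = 1$, and the rescaled functions
\begin{equation*}
v_k(y) := \frac{1}{M_k}\, u_k(x_k + \lambda_k y), \qquad y \in \Omega_k := \lambda_k^{-1}(\Omega - x_k),
\end{equation*}
satisfy $|v_k|\le 1$ on $\Omega_k$ and $|v_k(0)|=1$. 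A direct chain-rule computation shows
\begin{equation*}
A_k^{n/2} v_k(y) + \sum_{|\beta| \le n-1} \lambda_k^{n-|\beta|}\, b_\beta(x_k+\lambda_k y)\, D^\beta v_k(y) = \frac{\lambda_k^n}{M_k}\, f\!\left(x_k+\lambda_k y,\, M_k v_k(y)\right),
\end{equation*}
where $A_k := \sum_{i,j} a_{ij}(x_k+\lambda_k y)\,\partial_{y_i y_j}$. Since $\lambda_k \to 0$, the lower order terms vanish uniformly on compact sets, and by \eqref{0-3c} combined with $|v_k|\le 1$ the right-hand side converges locally uniformly to $h(x_\infty)(v_+)^p - k(x_\infty)(v_-)^p$, where $x_\infty := \lim x_k \in \overline{\Omega}$.

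Using the uniform ellipticity \eqref{0-2c} and standard interior $L^p$ and Schauder estimates for the $n$-th order operator $A_k^{n/2}$, extract a subsequence along which $v_k \to v$ in $C^{n-2}_{\mathrm{loc}}$. An orthogonal change of variables diagonalizing $(a_{ij}(x_\infty))$ converts the limit operator into $(-\Delta)^{n/2}$ (up to a positive constant absorbed into $h$ and $k$). Separate two cases according to $d_k := \mathrm{dist}(x_k,\partial\Omega)/\lambda_k$. If $d_k \to +\infty$ (interior blow-up), the rescaled domains $\Omega_k$ exhaust $\mathbb{R}^n$ and $v$ is a bounded classical solution of the limit equation on $\mathbb{R}^n$ with $|v(0)|=1$. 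If $d_k$ stays bounded (boundary blow-up at some $x_\infty \in \partial\Omega$), first flatten $\partial\Omega$ near $x_\infty$ by a $C^{n-2}$ diffeomorphism and translate before rescaling; the Navier conditions $u = Au = \cdots = A^{n/2-1}u = 0$ are preserved by rescaling and pass to the limit, yielding a nontrivial bounded classical solution on $\mathbb{R}^n_+$ with the full set of Navier boundary conditions.

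In each case, the contradiction is obtained by applying Theorem \ref{wthm} or Theorem \ref{thm1}. The only genuinely delicate point---and the main obstacle---is that those Liouville theorems cover \emph{nonnegative} solutions, whereas the blow-up limit $v$ could a priori be sign-changing. This is handled exactly as in \cite[Theorem 6]{CFL}: split the analysis according to the sign of $u_k(x_k)$, so that $v(0)=\pm 1$ is a global extremum of $v$; the two-sided asymptotic hypothesis \eqref{0-3c} then lets one reduce to a nonnegative critical order Lane-Emden problem with coefficient $h(x_\infty)$ or $k(x_\infty)$, which the Liouville theorems annihilate, forcing $v\equiv 0$ in contradiction with $|v(0)|=1$. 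Apart from this standard sign bookkeeping, the argument is routine once the Liouville theorems on $\mathbb{R}^n$ and $\mathbb{R}^n_+$ in the full range $1<p<+\infty$ are in hand.
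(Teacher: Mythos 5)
Your proposal reconstructs exactly the blow-up/rescaling argument that the paper itself invokes (it omits the proof and refers to Theorem 6 of Chen--Fang--Li \cite{CFL}, see Remark \ref{remark3c}): the scale $\lambda_k = M_k^{-(p-1)/n}$, the interior/boundary dichotomy, boundary flattening, passing to the limit to reach $(-\Delta)^{n/2}$ after freezing coefficients, and then substituting the critical order Liouville theorems \ref{wthm} and \ref{thm1} for their non-critical counterparts. This is the same approach as the paper; the only part you gloss over is the sign-bookkeeping for possibly sign-changing $u_k$, but the paper likewise defers that detail entirely to \cite{CFL}.
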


\begin{rem}\label{remark3c}
The proof of Theorem \ref{Thm4c} is entirely similar to that of Theorem 6 in \cite{CFL} (see also Theorem 1.13 in \cite{DQ0}). We only need to replace the Liouville theorems for non-critical order Lane-Emden equations in $\mathbb{R}^{n}$ (see Lin \cite{Lin} for fourth order and Wei and Xu \cite{WX} for general even order) by Liouville theorems for critical order equations in $\mathbb{R}^{n}$ (see Bidaut-V\'{e}ron and Giacomini \cite{BG} for $n=2$ and Chen, Dai and Qin \cite{CDQ} for $n\geq4$, see also Theorem \ref{wthm}), and replace the Liouville theorems for non-critical order Lane-Emden equations on $\mathbb{R}^{n}_{+}$ (Theorem 5 in \cite{CFL}, or further, Theorem 1.10 in \cite{DQ0}) by Theorem \ref{thm1} in the proof. Thus we omit the details of the proof.
\end{rem}

One can immediately apply Theorem \ref{Thm4c} to the following critical order Navier problem:
\begin{equation}\label{tNavier}\\\begin{cases}
(-\Delta)^{\frac{n}{2}}u(x)=u^{p}(x)+t \,\,\,\,\,\,\,\,\,\, \text{in} \,\,\, \Omega, \\
u(x)=-\Delta u(x)=\cdots=(-\Delta)^{\frac{n}{2}-1}u(x)=0 \,\,\,\,\,\,\,\, \text{on} \,\,\, \partial\Omega,
\end{cases}\end{equation}
where $n\geq2$, $\Omega\subset\mathbb{R}^{n}$ is a bounded domain with boundary $\partial\Omega\in C^{n-2}$ and $t$ is an arbitrary nonnegative real number.

We can deduce the following corollary from Theorem \ref{Thm4c}.
\begin{cor}\label{cor1c}
Assume $1<p<+\infty$. Then, for any nonnegative solution $u\in C^{n}(\Omega)\cap C^{n-2}(\overline{\Omega})$ to the critical order Navier problem \eqref{tNavier}, we have
	\begin{equation}\label{0-5c}
	\|u\|_{L^{\infty}(\overline{\Omega})}\leq C(n,p,\Omega).
	\end{equation}
\end{cor}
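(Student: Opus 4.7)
The plan is a direct application of Theorem \ref{Thm4c} to the nonlinearity $f(x,u) = u^p + t$, with the only genuine subtlety being the uniformity of the resulting constant in the parameter $t \geq 0$.

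Since Theorem \ref{Thm4c} is formulated for nonlinearities defined on all of $\overline{\Omega} \times \mathbb{R}$ (it accommodates possibly sign-changing classical solutions), while \eqref{tNavier} intrinsically concerns only $u \geq 0$, I would first extend the nonlinearity to
\[
\widetilde{f}(x,s) := |s|^{p} + t, \qquad (x,s) \in \overline{\Omega} \times \mathbb{R}.
\]
Because $p > 1$, this $\widetilde{f}$ is continuous, and every nonnegative classical solution of \eqref{tNavier} is, in particular, a classical solution of the Navier problem associated with $\widetilde{f}$.

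Next I would verify the asymptotic hypothesis \eqref{0-3c}. A direct computation gives
\[
\lim_{s \to +\infty} \frac{\widetilde{f}(x,s)}{s^{p}} = 1, \qquad \lim_{s \to -\infty} \frac{\widetilde{f}(x,s)}{|s|^{p}} = 1,
\]
uniformly on $\overline{\Omega}$, so in the notation of Theorem \ref{Thm4c} one has $h \equiv k \equiv 1$, both positive and continuous. Theorem \ref{Thm4c} then delivers a constant $C$, depending only on $\Omega$, $n$, $p$, $h$, $k$, such that $\|u\|_{L^{\infty}(\overline{\Omega})} \leq C$ for every such solution. Because here $h \equiv k \equiv 1$ \emph{independently of} $t$, this dependence collapses to $C = C(n,p,\Omega)$, which is exactly \eqref{0-5c}.

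The main thing to watch is that the constant be honestly uniform in $t \in [0,+\infty)$ and not merely depend on it implicitly. This is ensured by the choice of extension $\widetilde{f}$: the limiting functions $h,k$ produced by \eqref{0-3c} are the fixed constants $1$ for every admissible $t$, so the blow-up/re-scaling argument underlying Theorem \ref{Thm4c} is performed once and for all. Heuristically, along a hypothetical sequence of solutions $u_k$ with $M_k := \|u_k\|_{L^{\infty}} \to \infty$, the rescaling that is natural for the $u^{p}$ term turns the additive piece $t$ into a term of order $t/M_k^{p}$ in the rescaled equation, which tends to $0$ (since $p>1$ and a straightforward Green's-function comparison on $\Omega$ forces $t \lesssim M_k$ along any solution sequence). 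The limit is thus a nonnegative classical solution of $(-\Delta)^{n/2} v = v^{p}$ on either $\mathbb{R}^{n}$ or $\mathbb{R}^{n}_{+}$ with the appropriate Navier data, which Theorems \ref{wthm} and \ref{thm1} force to vanish identically, contradicting the normalization $v(0)=1$ at the limiting maximum point. Since this contradiction is entirely insensitive to $t$, the bound $C$ is intrinsic to $(n,p,\Omega)$.
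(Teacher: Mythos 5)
Your proof is correct and follows the same route the paper intends (the paper gives no details, simply citing Theorem \ref{Thm4c}): extend the nonlinearity to $\widetilde{f}(x,s)=|s|^{p}+t$, note $h\equiv k\equiv 1$, and invoke Theorem \ref{Thm4c}. You are right to flag the uniformity-in-$t$ issue, since \eqref{0-3c} is not uniform over the family $\{\widetilde{f}_t\}_{t\geq 0}$, and your resolution is the correct one: by the iterated maximum principle on the bounded domain one has $u=\int_\Omega G_{n/2}(x,y)\bigl(u^p+t\bigr)\,dy$ with $G_{n/2}>0$, whence $t\leq C(\Omega)\,\|u\|_{L^\infty}$, so the rescaled additive term $t/M_k^{p}\leq C M_k^{1-p}\to 0$ because $p>1$, and the blow-up limit is insensitive to $t$.
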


\begin{rem}\label{remark5c}
In \cite{CDQ}, due to the lack of Liouville theorem in $\mathbb{R}^{n}_{+}$ (Theorem \ref{thm1}), the authors first applied the method of moving planes in local way to derive a boundary layer estimates, then by using blowing-up arguments (see \cite{BM,CL}), they could only establish the a priori estimates for the critical order Navier problem \eqref{tNavier} under the assumptions that either $1<p<+\infty$ and $\Omega$ is strictly convex, or $1<p\leq \frac{n+2}{n-2}$ (see Theorem 1.3 in \cite{CDQ}). Now, as an immediate consequence of Theorem \ref{Thm4c}, we derive in Corollary \ref{cor1c} a priori estimates for the critical order Navier problem \eqref{tNavier} for all the cases $1<p<+\infty$ with no convexity assumptions on $\Omega$, which extends Theorem 1.3 in \cite{CDQ} remarkably.
\end{rem}

As a consequence of the a priori estimates (Corollary \ref{cor1c}), by applying the Leray-Schauder fixed point theorem (see Theorem 4.1 in \cite{CDQ}), we can derive existence result for positive solution to the following Navier problem for critical order Lane-Emden equations in the full range $1<p<+\infty$:
\begin{equation}\label{Navier}\\\begin{cases}
(-\Delta)^{\frac{n}{2}}u(x)=u^{p}(x) \,\,\,\,\,\,\,\,\,\, \text{in} \,\,\, \Omega, \\
u(x)=-\Delta u(x)=\cdots=(-\Delta)^{\frac{n}{2}-1}u(x)=0 \,\,\,\,\,\,\,\, \text{on} \,\,\, \partial\Omega,
\end{cases}\end{equation}
where $n\geq 2$ is even, $1<p<+\infty$ and $\Omega\subset\mathbb{R}^{n}$ is a bounded domain with boundary $\partial\Omega\in C^{n-2}$.

By virtue of the a priori estimates (Theorem 1.3 in \cite{CDQ}), using the Leray-Schauder fixed point theorem, Chen, Dai and Qin \cite{CDQ} obtained existence of positive solution for the critical order Navier problem \eqref{Navier} under the assumptions that either $p\in(1,+\infty)$ and $\Omega$ is strictly convex, or $p\in\left(1,\frac{n+2}{n-2}\right]$ (Theorem 1.4 in \cite{CDQ}). For existence results on non-critical higher order H\'{e}non-Hardy equations on bounded domains, please see \cite{CPY,CLP,DPQ,DQ0,GGN,N} and the references therein. Since Corollary \ref{cor1c} extends Theorem 1.3 in \cite{CDQ} to the full range $1<p<+\infty$ with no convexity assumptions on $\Omega$, through entirely similar arguments, we can improve Theorem 1.4 in \cite{CDQ} remarkably and derive the following existence result for positive solution to the critical order Navier problem \eqref{Navier} in the full range $1<p<+\infty$.

\begin{thm}\label{Thm5}
Assume $1<p<+\infty$. Then, the critical order Navier problem \eqref{Navier} possesses at least one positive solution $u\in C^{n}(\Omega)\cap C^{n-2}(\overline{\Omega})$. Moreover, the positive solution $u$ satisfies
	\begin{equation}\label{lower-bound}
	\|u\|_{L^{\infty}(\overline{\Omega})}\geq\left(\frac{\sqrt{2n}}{diam\,\Omega}\right)^{\frac{n}{p-1}}.
	\end{equation}
\end{thm}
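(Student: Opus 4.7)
The plan is to deduce Theorem \ref{Thm5} from Corollary \ref{cor1c} via the Leray--Schauder fixed-point theorem (Theorem 4.1 in \cite{CDQ}), following the same continuation scheme that gave Theorem 1.4 in \cite{CDQ}, but now with no restriction on $p$ or on the convexity of $\Omega$. Because the Navier boundary conditions factor $(-\Delta)^{\frac{n}{2}}$ into a composition of $\frac{n}{2}$ Dirichlet Laplacians, iterated $W^{2,q}$ estimates and Sobolev embedding supply a compact solution operator $K : C(\overline{\Omega}) \to C^{n-2}(\overline{\Omega})$ inverting $(-\Delta)^{\frac{n}{2}}$ under Navier BC. I would then set $T_t u := K(u_+^p + t)$ for $t \geq 0$; fixed points of $T_t$ coincide with classical nonnegative solutions of \eqref{tNavier}, nonnegativity in $\Omega$ being forced by the positivity of the Green's kernel.

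The Leray--Schauder alternative needs two inputs. First, a uniform a priori bound $\|u\|_{L^\infty} \leq C(n,p,\Omega)$ along the homotopy $t \in [0,t_1]$, supplied verbatim by Corollary \ref{cor1c} (whose constant is independent of $t$, since adding the constant $t$ does not affect the asymptotic $\lim_{s\to\infty}(s^p+t)/s^p = 1$ required by \eqref{0-3c}). Second, nonexistence of fixed points of $T_{t_1}$ for $t_1$ sufficiently large: testing \eqref{tNavier} against the first Dirichlet eigenfunction $\phi_1>0$ of $-\Delta$ on $\Omega$, which also satisfies the Navier conditions and $(-\Delta)^{\frac{n}{2}}\phi_1 = \lambda_1^{n/2}\phi_1$, gives
\[
\int_\Omega u^p\phi_1 \;+\; t\int_\Omega\phi_1 \;=\; \lambda_1^{n/2}\int_\Omega u\phi_1,
\]
and Young's inequality $\lambda_1^{n/2}u \leq \tfrac{1}{2}u^p + C_p$ forces $t$ to remain bounded. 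These two ingredients produce a nontrivial fixed point of $T_0$; iterating the strong maximum principle through the $\frac{n}{2}$ Dirichlet factors then upgrades it to a strictly positive classical solution of \eqref{Navier}.

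For the lower bound \eqref{lower-bound} I would use the same eigenfunction test at $t=0$: the identity $\int u^p\phi_1 = \lambda_1^{n/2}\int u\phi_1$ together with $u^p \leq \|u\|_{L^\infty}^{p-1}u$ forces $\|u\|_{L^\infty} \geq \lambda_1^{n/(2(p-1))}$. Since $\Omega \subset B_d(x_0)$ for any $x_0\in\Omega$ (where $d = \mathrm{diam}\,\Omega$), the Bessel-zero estimate $j_{\frac{n}{2}-1,1}^2 \geq 2n$, valid for all even $n\geq 2$, yields $\lambda_1(\Omega)\geq \lambda_1(B_d) \geq 2n/d^2$; substitution then gives the asserted $\|u\|_{L^\infty} \geq (\sqrt{2n}/d)^{n/(p-1)}$.

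The main obstacle is the supercritical a priori estimate, which in \cite{CDQ} forced either $p\leq\tfrac{n+2}{n-2}$ or a convexity hypothesis on $\Omega$; this has now been removed in full generality by Corollary \ref{cor1c} (itself a consequence of the new half-space Liouville theorem \ref{thm1}). From that point on, the Leray--Schauder continuation, the strong maximum principle, and the one-line eigenfunction argument for \eqref{lower-bound} are entirely standard.
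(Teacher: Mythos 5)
Your overall existence argument — the Leray--Schauder continuation using a compact solution operator for $(-\Delta)^{\frac{n}{2}}$ under Navier conditions, with the a priori bound supplied verbatim by Corollary \ref{cor1c} and the nonexistence of fixed points for large $t$ via a first-eigenfunction test — is exactly the scheme the paper relies on (it simply points to Theorem~1.4 and Theorem~4.1 of \cite{CDQ}, replacing the a priori input by Corollary \ref{cor1c}). So the existence portion matches the paper's route.

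Where you genuinely diverge is in the derivation of the lower bound \eqref{lower-bound}. You test $(-\Delta)^{\frac{n}{2}}u=u^p$ against $\phi_1$ to get $\|u\|_{L^\infty}\geq\lambda_1^{n/(2(p-1))}$, and then you import a Bessel-zero estimate $j_{\frac{n}{2}-1,1}^{2}\geq 2n$ together with domain monotonicity $\lambda_1(\Omega)\geq\lambda_1(B_d)$. This is correct, but the Bessel-zero inequality is a nontrivial numerical fact (it fails to follow from the standard asymptotic $j_{\nu,1}^2>\nu(\nu+2)$ for $n=4,6,8$ and has to be checked case by case, or via Jung's radius inequality), so as written it is an unproved assertion. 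The paper's argument (from \cite{CDQ}) is more elementary and produces the constant $\sqrt{2n}$ directly, with no spectral input: writing $v_k:=(-\Delta)^k u$ and $M:=\|u\|_{L^\infty}$, one has $-\Delta v_{\frac{n}{2}-1}=u^p\leq M^p$ with $v_{\frac{n}{2}-1}=0$ on $\partial\Omega$, and comparison with the barrier $\psi(x)=\tfrac{M^p}{2n}\bigl(d^2-|x-x_0|^2\bigr)$ (where $d=\mathrm{diam}\,\Omega$, $x_0\in\overline\Omega$, and $-\Delta\psi=M^p$, $\psi\geq0$ on $\partial\Omega$) gives $v_{\frac{n}{2}-1}\leq\tfrac{M^p d^2}{2n}$. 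Iterating this comparison through the $\frac{n}{2}$ Dirichlet factors yields $u\leq M^p\bigl(\tfrac{d^2}{2n}\bigr)^{\frac{n}{2}}$, and evaluating at a maximum point gives $M\leq M^p\bigl(\tfrac{d^2}{2n}\bigr)^{\frac{n}{2}}$, i.e.\ $\|u\|_{L^\infty}\geq(\sqrt{2n}/d)^{n/(p-1)}$. Both approaches are valid; the comparison argument has the advantage of being self-contained and explaining the specific form of the constant, whereas your eigenfunction route needs an additional citation for the Bessel-zero bound.
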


\begin{rem}\label{remark4c}
The proof of Theorem \ref{Thm5} is entirely similar to that of Theorem 1.4 in \cite{CDQ}. We only need to replace Theorem 1.3 in \cite{CDQ} by Corollary \ref{cor1c} in the proof. Thus we omit the details of the proof.
\end{rem}

\begin{rem}\label{remark6c}
The lower bounds \eqref{lower-bound} on the $L^{\infty}$ norm of positive solutions $u$ indicate that, if $diam\,\Omega<\sqrt{2n}$, then a uniform priori estimate does not exist and blow-up may occur when $p\rightarrow1+$.
\end{rem}

\subsection{Extensions to general nonlinearities}

Consider the following integral equations associated with Navier problems for general critical order elliptic equations on $\mathbb{R}^{n}_{+}$:
\begin{equation}\label{eIE}
	u(x)=\int_{\mathbb{R}^{n}_{+}}G^{+}(x,y)f(y,u(y))dy,
\end{equation}
where $u\in C(\overline{\mathbb{R}^{n}_+})$, $n\geq1$ and the nonlinear terms $f:\,\mathbb{R}^{n}_{+}\times\overline{\mathbb{R}_{+}}\rightarrow \overline{\mathbb{R}_{+}}$.

\begin{defn}\label{defn1}
We say that the nonlinear term $f$ has subcritical growth, provided that
\begin{equation}\label{e1}
  \mu^{n}f(\mu x,u)
\end{equation}
is strictly increasing with respect to $\mu\geq1$ or $\mu\leq1$ for all $(x,u)\in\mathbb{R}^{n}_{+}\times\mathbb{R}_{+}$.
\end{defn}
\begin{defn}\label{defn2}
A function $g(x,u)$ defined on $\mathbb{R}^{n}_{+}\times\overline{\mathbb{R}_{+}}$ is called locally Lipschitz on $u$, provided that for any $u_{0}\in\overline{\mathbb{R}_{+}}$ and $\omega\subseteq\mathbb{R}^{n}_{+}$ bounded, there exists a (relatively) open neighborhood $U(u_{0})\subset\overline{\mathbb{R}_{+}}$ such that $g$ is Lipschitz continuous on $u$ in $\omega\times U(u_{0})$.
\end{defn}

We need the following three assumptions on the nonlinear term $f(x,u)$. \\
$(\mathbf{f_{1}})$ The nonlinear term $f$ is non-decreasing about $u$ in $\mathbb{R}^{n}_{+}\times\overline{\mathbb{R}_{+}}$, namely,
\begin{equation}\label{e2}
  (x,u), \, (x,v)\in\mathbb{R}^{n}_{+}\times\overline{\mathbb{R}_{+}} \,\,\, \text{with} \,\,\, u\leq v \,\,\, \text{implies} \,\,\, f(x,u)\leq f(x,v).
\end{equation}
$(\mathbf{f_{2}})$ There exists a $\sigma<n$ such that, $|x|^{\sigma}f(x,u)$ is locally Lipschitz on $u$ in $\mathbb{R}^{n}_{+}\times\overline{\mathbb{R}_{+}}$. \\
$(\mathbf{f_{3}})$ There exist a cone $\mathcal{C}\subset\mathbb{R}^{n}_{+}$ containing the positive $x_{n}$-axis with vertex at $0$ (say, $\mathcal{C}=\{x\in\mathbb{R}^{n}_{+}\,|\,x_{n}>\frac{|x|}{\sqrt{n}}\}$), constants $C>0$, $-n<a<+\infty$ and $0<p<+\infty$ such that, the nonlinear term
\begin{equation}\label{e3}
  f(x,u)\geq C|x|^{a}u^{p} \qquad \text{in} \,\,\, \mathcal{C}\times\overline{\mathbb{R}_{+}}.
\end{equation}

\smallskip

By applying the method of scaling spheres to the generalized integral equations \eqref{eIE}, we can derive the following Liouville theorem.
\begin{thm}\label{generalIE}
Assume $f$ is subcritical and satisfies the assumptions $(\mathbf{f_{1}})$, $(\mathbf{f_{2}})$ and $(\mathbf{f_{3}})$, then the Liouville type results in Theorem \ref{thmIE} are valid for integral equations \eqref{eIE}.
\end{thm}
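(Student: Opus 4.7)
The plan is to adapt the method of scaling spheres in integral form used in the proof of Theorem \ref{thmIE}, replacing the pure power $|y|^{a}u^{p}(y)$ by the general nonlinearity $f(y,u(y))$ and tracking carefully how each of the four structural hypotheses steps in: I would use $(\mathbf{f_{1}})$ for pointwise monotonicity, $(\mathbf{f_{2}})$ for local Lipschitz control, $(\mathbf{f_{3}})$ to produce the initial power lower bound on a cone, and the subcriticality condition \eqref{e1} to furnish the strict scaling gain driving the iteration.

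Arguing by contradiction, I would assume $u\not\equiv 0$ and first establish that $u>0$ everywhere on $\mathbb{R}^{n}_{+}$, using the strict positivity of $G^{+}$ inside $\mathbb{R}^{n}_{+}\times\mathbb{R}^{n}_{+}$. Restricting the integration in \eqref{eIE} to $\mathcal{C}\cap\{1\leq|y|\leq 2\}$ and applying $(\mathbf{f_{3}})$ would then give a quantitative starting lower bound $u(x)\geq c_{0}|x|^{-\mu_{0}}$ (up to a logarithmic factor) on a cone at infinity. For each $\lambda>0$ I would then set $x^{\lambda}=\lambda^{2}x/|x|^{2}$ and $u_{\lambda}(x)=u(x^{\lambda})$, and after the substitution $y\mapsto y^{\lambda}$ in \eqref{eIE} on $B_{\lambda}$, derive an identity of the schematic form
\begin{equation*}
u(x)-u_{\lambda}(x)=\int_{\mathbb{R}^{n}_{+}\setminus B_{\lambda}}\bigl[G^{+}(x,y)-G^{+}(x^{\lambda},y)\bigr]\Bigl\{f(y,u(y))-\tfrac{\lambda^{2n}}{|y|^{2n}}\,f(y^{\lambda},u_{\lambda}(y))\Bigr\}\,dy
\end{equation*}
on $\mathbb{R}^{n}_{+}\setminus B_{\lambda}$, with a nonnegative kernel factor. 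The subcriticality \eqref{e1} and monotonicity $(\mathbf{f_{1}})$ would bound the brace below by a nonnegative Lipschitz multiple (via $(\mathbf{f_{2}})$) of $(u(y)-u_{\lambda}(y))_{-}$, and a Hardy--Littlewood--Sobolev type estimate adapted to the logarithmic Green's function, together with the starting lower bound, would close a contraction inequality for $\|(u-u_{\lambda})_{-}\|_{L^{q}}$ in a suitable $q$. This would force $u\geq u_{\lambda}$ on $\mathbb{R}^{n}_{+}\setminus\overline{B_{\lambda}}$ for every $\lambda>0$, the ``scaling'' (rather than ``moving'') conclusion.

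Having $u\geq u_{\lambda}$ for all $\lambda>0$ should then translate into a power-type decay estimate $u(x)\geq c|x|^{-\mu}$ on a fixed cone at infinity for an explicit $\mu$. I would feed it back into \eqref{eIE} through $(\mathbf{f_{3}})$ to get a strictly smaller exponent $\mu'<\mu$, the quantitative improvement being governed by the strict inequality in \eqref{e1} (which plays here the role of $p-1>0$ in the pure-power case). After finitely many bootstrap iterations $\mu$ would reach a value for which $\int_{\mathcal{C}}G^{+}(x_{0},y)\,f(y,u(y))\,dy=+\infty$ at some $x_{0}$ with $G^{+}(x_{0},\cdot)>0$, contradicting \eqref{eIE} and forcing $u\equiv 0$.

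The main obstacle will be the scaling step. In the critical order regime $\alpha=n$ the Kelvin transform carries no algebraic weight (the usual prefactor $(\lambda/|x|)^{n-\alpha}$ degenerates to $1$), so the scaling gain must come entirely from the logarithmic structure of $G^{+}$ together with the strict monotonicity \eqref{e1}. Converting the monotonicity in the scalar parameter $\mu$ in \eqref{e1} into a pointwise integrand inequality uniform in $\lambda$ — and matching it with the Lipschitz bound from $(\mathbf{f_{2}})$ in the right integrability class — is the delicate technical point and is where all four hypotheses enter simultaneously; everything downstream of it follows the pattern of Theorem \ref{thmIE}.
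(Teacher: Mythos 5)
Your overall architecture — derive a Kelvin-transform identity on one side of $S^{+}_{\lambda}$, use $(\mathbf{f_{1}})$ for monotonicity, $(\mathbf{f_{2}})$ for the Lipschitz bound on the ``brace'', run a narrow-region/HLS contraction to push $\lambda_{0}$ to $+\infty$, convert $u\geq u_{\lambda}$ into a lower bound on a cone, and then bootstrap via $(\mathbf{f_{3}})$ to a contradiction — is the same scaling-spheres scheme the paper uses for Theorem \ref{thmIE} and indicates should be repeated here (working outside $B^{+}_{\lambda}(0)$ precisely when $f$ is subcritical in the sense ``$\mu\leq1$''). Your schematic identity is correct, and the kernel positivity $G^{+}(x,y)>G^{+}(x^{\lambda},y)$ on $\mathbb{R}^{n}_{+}\setminus B_{\lambda}$ does follow from the Kelvin invariance $G^{+}(x^{\lambda},y^{\lambda})=G^{+}(x,y)$ together with \eqref{4-2}. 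However, there are two substantive misunderstandings of the mechanism that would derail the write-up.

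First, you misidentify what the scaling-spheres step yields. From $u\geq u_{\lambda}$ on $\mathbb{R}^{n}_{+}\setminus B^{+}_{\lambda}(0)$ for \emph{all} $\lambda>0$, taking $\lambda=\sqrt{|x|}$ gives $u(x)\geq u(x/|x|)$ for $|x|\geq 1$, hence the \emph{constant} lower bound $u(x)\geq C_{0}:=\min_{|z|=1,\,z_{n}\geq 1/\sqrt{n}}u(z)>0$ on the cone — not a ``power-type decay estimate $u(x)\geq c|x|^{-\mu}$ for an explicit $\mu$.'' This distinction is not cosmetic: the paper's bootstrap \eqref{8-1}--\eqref{8-4} starts from $\mu_{0}=0$ and relies on $\mu_{k+1}=p\mu_{k}+(n+a)>\mu_{k}$ with $n+a>0$, $p\geq1$; if you instead seed it with a genuine decay exponent $\mu_{0}>0$, the iteration need not improve at all (e.g.\ $p$ large makes $p(-\mu_{0})+(n+a)$ worse). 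Your preliminary ``restrict to $\mathcal{C}\cap\{1\leq|y|\leq 2\}$'' step only yields $u(x)\gtrsim |x|^{-1}$ and is both unnecessary (it plays no role in the HLS contraction, which needs only local integrability of $|x|^{-\sigma}$ times the local Lipschitz constant, guaranteed by $\sigma<n$ in $(\mathbf{f_{2}})$) and insufficient as a bootstrap seed.

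Second, you attribute the ``quantitative improvement'' in the bootstrap to the strict inequality in \eqref{e1}. In fact subcriticality \eqref{e1} is consumed entirely inside the scaling-spheres argument — it is what makes the brace \emph{strictly} positive when $u\equiv u_{\lambda}$, ruling out $\omega^{\lambda_{0}}\equiv 0$ and forcing $\lambda_{0}=+\infty$ (the analogue of \eqref{4-20}). The strictly increasing exponents $\mu_{k}$ in the bootstrap come solely from $(\mathbf{f_{3}})$: the cone power lower bound $f\geq C|x|^{a}u^{p}$ with $n+a>0$, fed through \eqref{GP-4}. Once these two roles are swapped back, the rest of your outline follows the paper's proof of Theorem \ref{thmIE} as intended.
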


\begin{rem}
By using the method of scaling spheres, Theorem \ref{generalIE} can be proved through a quite similar way as in the proof of Theorem \ref{thmIE}, so we leave the details to readers. We would like to mention that, if the nonlinear term $f(x,u)$ satisfies subcritical conditions for $\mu\leq1$ (see Definition \ref{defn1}), we only need to carry out calculations and estimates outside the upper half ball $B^{+}_{\lambda}(0)$ during the scaling spheres procedure.
\end{rem}

\begin{rem}
In particular, $f(x,u)=|x|^{a}u^{p}$ with $a>-n$ satisfies all the assumptions in Theorem \ref{generalIE}, thus Theorem \ref{thmIE} can also be regarded as a corollary of Theorem \ref{generalIE}. In addition, $f(x,u)=|x|^{a}(x_{n})^{b}u^{p}$ with $a+b>-n$ and $b\geq0$ also satisfies all the assumptions in Theorem \ref{generalIE}.
\end{rem}

Next, we consider the following Navier problems for general critical order elliptic equations on $\mathbb{R}^{n}_{+}$:
\begin{equation}\label{eNPDE}\\\begin{cases}
(-\Delta)^{\frac{n}{2}}u(x)=f(x,u(x)), \,\,\,\,\,\,\, u(x)\geq0, \,\,\,\,\,\,\,\, \,\,\, x\in\mathbb{R}^{n}_+, \\
u=(-\Delta)u=\cdots=(-\Delta)^{\frac{n}{2}-1}u=0 \,\,\,\,\,\,\,\,\,\,\, \text{on} \,\,\,  \partial\mathbb{R}^{n}_+,
\end{cases}\end{equation}
where $u\in C^{n}(\mathbb{R}^{n}_+)\cap C^{n-2}(\overline{\mathbb{R}^{n}_+})$, $n\geq2$ is even and the nonlinear terms $f:\,\mathbb{R}^{n}_{+}\times\overline{\mathbb{R}_{+}}\rightarrow \overline{\mathbb{R}_{+}}$.

It is clear from the proof of Theorem \ref{Thm0} that (see Section 2), under the same assumptions, the super poly-harmonic properties in Theorem \ref{Thm0} also hold for nonnegative classical solutions to the generalized critical order elliptic equations \eqref{eNPDE} provided that
\begin{equation}\label{Inequality}
f(x,u)\geq C|x|^{a}u^{p} \qquad \text{in} \,\,\, \mathbb{R}^{n}_{+}\times\overline{\mathbb{R}_{+}}.
\end{equation}
Based on the super poly-harmonic properties, one can verify under some assumptions on $f(x,u)$ that the proof of Theorem \ref{equivalence} can also be adopted to show the equivalence between the generalized critical order PDEs \eqref{eNPDE} and IEs \eqref{eIE} (see Section 3). For these purpose, we need the following assumptions on the nonlinear term $f(x,u)$. \\
$(\mathbf{f_{2}'})$ The nonlinear term $f(x,u)$ is locally Lipschitz on $u$ in $\mathbb{R}^{n}_{+}\times\overline{\mathbb{R}_{+}}$. \\
$(\mathbf{f_{3}'})$ There exist constants $C>0$, $0\leq a<+\infty$ and $0<p<+\infty$ such that, the nonlinear term $f(x,u)$ satisfies \eqref{Inequality}.

\smallskip

As a consequence of Theorem \ref{generalIE}, we derive the following Liouville theorem for the generalized critical order PDEs \eqref{eNPDE}.
\begin{thm}\label{generalNPDE}
Assume $f$ is subcritical and satisfies the assumptions $(\mathbf{f_{1}})$, $(\mathbf{f_{2}'})$ and $(\mathbf{f_{3}'})$, then the Liouville type results in Theorem \ref{thm1} are valid for PDEs \eqref{eNPDE}.
\end{thm}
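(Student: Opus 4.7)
The plan is to reduce Theorem \ref{generalNPDE} to the integral equation Liouville result in Theorem \ref{generalIE} in three steps, mirroring the route already taken for the typical nonlinearity $f(x,u)=|x|^{a}u^{p}$: first extend the super poly-harmonic properties to the general PDE \eqref{eNPDE}, then use them to establish the equivalence between \eqref{eNPDE} and \eqref{eIE}, and finally invoke Theorem \ref{generalIE} to conclude $u\equiv 0$.

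\textbf{Step 1 (super poly-harmonic).} I would first show that, under $(\mathbf{f_{3}'})$, any nonnegative classical solution $u$ of \eqref{eNPDE} satisfies $(-\Delta)^{i}u(x)\geq 0$ on $\overline{\mathbb{R}^{n}_{+}}$ for every $i=1,\dots,\tfrac{n}{2}-1$. Since $(\mathbf{f_{3}'})$ forces $f(x,u)\geq C|x|^{a}u^{p}$ in all of $\mathbb{R}^{n}_{+}\times\overline{\mathbb{R}_{+}}$ with $a\geq 0$, the hypothesis $a\geq -2p-2$ in Theorem \ref{Thm0} is automatic, and the pointwise lower bound \eqref{Inequality} is exactly the input needed for the spherical-average and integration-against-Green's-function arguments used to prove Theorem \ref{Thm0}. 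So the same averaging argument (applied to $-(-\Delta)^{i}u$ over half-balls and using $(\mathbf{f_{1}})$ only implicitly through nonnegativity of $f$) carries over verbatim.

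\textbf{Step 2 (equivalence).} Next I would show that $u$ solves the PDE \eqref{eNPDE} if and only if it solves the integral equation \eqref{eIE}. Once the super poly-harmonic properties of Step 1 are in hand, the proof of Theorem \ref{equivalence} becomes a representation argument via the Navier Green's function: one iterates Poisson-type identities to recover $u$ from $(-\Delta)^{n/2}u=f(x,u)$. The only regularity input actually used on the right-hand side beyond nonnegativity is the ability to apply standard elliptic theory to the iterated Poisson equations, which is supplied by $(\mathbf{f_{2}'})$ (local Lipschitz continuity in $u$), together with local boundedness of $u$ from the classical solution hypothesis. Hence the proof in the $|x|^{a}u^{p}$ case adapts with only cosmetic changes.

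\textbf{Step 3 (apply the IE Liouville theorem).} Having realized every nonnegative classical solution of \eqref{eNPDE} as a nonnegative continuous solution of \eqref{eIE}, I would apply Theorem \ref{generalIE}. This requires checking that $f$ satisfies $(\mathbf{f_{1}})$, $(\mathbf{f_{2}})$, and $(\mathbf{f_{3}})$: $(\mathbf{f_{1}})$ is assumed; $(\mathbf{f_{2}'})$ implies $(\mathbf{f_{2}})$ with $\sigma=0<n$; and $(\mathbf{f_{3}'})$, which holds on all of $\mathbb{R}^{n}_{+}\times\overline{\mathbb{R}_{+}}$, trivially implies $(\mathbf{f_{3}})$ on any cone $\mathcal{C}\subset\mathbb{R}^{n}_{+}$. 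Subcriticality of $f$ is a direct hypothesis of the theorem. Thus Theorem \ref{generalIE} yields $u\equiv 0$.

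\textbf{Main obstacle.} The delicate piece is Step 2: verifying that the Green's function iteration used in Theorem \ref{equivalence} genuinely requires nothing more than $(\mathbf{f_{1}})$, $(\mathbf{f_{2}'})$, and $(\mathbf{f_{3}'})$. Concretely, one must justify (a) that $\int_{\mathbb{R}^{n}_{+}}G^{+}(x,y)f(y,u(y))\,dy$ converges, which follows from the super poly-harmonic properties combined with the growth control on $u$ inherited from them, and (b) that no boundary contributions at infinity appear in the iterated integration by parts — exactly the place where the sign information $(-\Delta)^{i}u\geq 0$ from Step 1 is indispensable. Once these two points are verified in the generality of $(\mathbf{f_{2}'})$–$(\mathbf{f_{3}'})$, Steps 1 and 3 are essentially bookkeeping, and Theorem \ref{generalNPDE} follows.
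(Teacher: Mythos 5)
Your proposal reproduces the paper's own reduction: the paper explicitly notes that (i) the proof of Theorem~\ref{Thm0} extends to \eqref{eNPDE} under the pointwise bound \eqref{Inequality} from $(\mathbf{f_{3}'})$, (ii) the equivalence proof of Theorem~\ref{equivalence} then adapts to show \eqref{eNPDE} is equivalent to \eqref{eIE}, and (iii) the conclusion is then ``a consequence of Theorem~\ref{generalIE}''. Your Step 3 verification that $(\mathbf{f_{2}'})$ gives $(\mathbf{f_{2}})$ with $\sigma=0$ and $(\mathbf{f_{3}'})$ gives $(\mathbf{f_{3}})$ is exactly the bookkeeping needed, so the proposal is correct and follows essentially the same route as the paper.
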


\begin{rem}
In particular, $f(x,u)=|x|^{a}u^{p}$ with $a\geq0$ satisfies all the assumptions in Theorem \ref{generalNPDE}, thus Theorem \ref{thm1} can also be regarded as a corollary of Theorem \ref{generalNPDE}.
\end{rem}

The rest of this paper is organized as follows. In section 2, we prove the super poly-harmonic properties for nonnegative solutions to \eqref{NPDE} (i.e., Theorem \ref{Thm0}) via a variant of the method used in \cite{CFL}. In section 3, we show the equivalence between PDE \eqref{NPDE} and IE \eqref{IE}, namely, Theorem \ref{equivalence}. Section 4 is devoted to the proof of Theorem \ref{thmIE}, then Theorem \ref{thm1} follows immediately as a consequence of Theorem \ref{thmIE}.

In the following, we will use $C$ to denote a general positive constant that may depend on $n$, $a$, $p$ and $u$, and whose value may differ from line to line.

\section{Super poly-harmonic properties}

In this section, we will prove Theorem \ref{Thm0}. To this end, we make an odd extension of $u$ to the whole space $\mathbb{R}^{n}$. Define
\begin{equation}\label{5-1}
  u(x',x_n)=-u(x',-x_n) \quad\, \text{for} \,\,\, x_n<0,
\end{equation}
where $x'=(x_1,\cdots,x_{n-1})\in\mathbb{R}^{n-1}$. Then $u$ satisfies
\begin{equation}\label{5-2}
  (-\Delta)^{\frac{n}{2}}u=|x|^{a}|u|^{p-1}u(x), \quad\,\,\, x\in\mathbb{R}^{n}.
\end{equation}
Let $v_{i}:=(- \Delta)^{i}u$. We aim to show that
\begin{equation}\label{5-3}
  v_{i}(x)\geq0
\end{equation}
for any $x\in\overline{\mathbb{R}^{n}_{+}}$ and $i=1,2,\cdots,\frac{n}{2}-1$. Our proof will be divided into two steps.
	
\textbf{\emph{Step 1.}} We first show that
	\begin{equation}\label{2-1}
	v_{\frac{n}{2}-1}=(-\Delta)^{\frac{n}{2}-1}u\geq0.
	\end{equation}
	If \eqref{2-1} does not hold, then there exists $x_{0}\in\mathbb{R}^n_+$, such that
	\begin{equation}\label{2-2}
	v_{\frac{n}{2}-1}(x_{0})<0.
	\end{equation}
	
	Now, let
	\begin{equation}\label{2-3}
	\bar{f}(r)=\bar{f}\big(|x-x_0|\big):=\frac{1}{|\partial B_{r}(x_{0})|}\int_{\partial B_{r}(x_{0})}f(x)d\sigma
	\end{equation}
	be the spherical average of $f$ with respect to the center $x_0$. Then by the well-known property $\overline{\Delta u}=\Delta\bar{u}$, we have
	\begin{equation}\label{2-4}
	\left\{{\begin{array}{l} {-\Delta\overline{v_{\frac{n}{2}-1}}(r)=\overline{|x|^{a}|u|^{p-1}u}(r)}, \\  {} \\ {-\Delta\overline{v_{\frac{n}{2}-2}}(r)=\overline{v_{\frac{n}{2}-1}}(r)}, \\ \cdots\cdots \\ {-\Delta\overline u(r)=\overline{v_1}(r)}. \\ \end{array}}\right.
	\end{equation}
	From the first equation in \eqref{2-4}, integrating both sides from $0$ to $r$, we have
	\begin{align}\label{2-5}
	-r^{n-1}\overline{v_{\frac{n}{2}-1}}'(r)&=\int_{0}^{r} s^{n-1}\overline{|x|^{a}|u|^{p-1}u}(s)ds\nonumber\\
	&=\frac{1}{\omega_n}\int_0^r \int_{\partial B_{s}(x_0)}|x|^{a}|u|^{p-1}u\,d\sigma ds\\
	&=\frac{1}{\omega_n}\int_{B_{r}(x_0)}|x|^{a}|u|^{p-1}u\,dx\geq 0,\nonumber
	\end{align}
where $\omega_{n}$ denotes the area of unit sphere in $\mathbb{R}^{n}$. Here we have used the fact that, since $x_0\in\mathbb{R}^{n}_{+}$, more than half of the ball $B_{r}(x_0)$ is contained in $\mathbb{R}^n_+$, so \eqref{2-5} follows from the odd symmetry of $u$ with respect to $\partial \mathbb{R}^n_+$. From \eqref{2-2} and \eqref{2-5}, one has
	\begin{equation}\label{2-6}
	\overline{v_{\frac{n}{2}-1}}'(r)\leq 0, \,\,\,\,\,\, \overline{v_{\frac{n}{2}-1}}(r)\leq\overline{v_{\frac{n}{2}-1}}(0)=v_{\frac{n}{2}-1}(x_0)<0, \,\,\,\,\,\, \forall \, r\geq 0.
	\end{equation}
	Then from the second equation in \eqref{2-4}, we have
	\begin{equation}\label{2-7}
	-\frac{1}{r^{n-1}}(r^{n-1}\overline{v_{\frac{n}{2}-2}}'(r))'= \overline{v_{\frac{n}{2}-1}}(r)\leq\overline{v_{\frac{n}{2}-1}}(0):=-c_0<0,\,\,\,\,\,\,\forall \, r\geq 0,
	\end{equation}
	which means
\begin{equation}\label{5-4}
  (r^{n-1}\overline{v_{\frac{n}{2}-2}}'(r))'\geq c_{0}r^{n-1},\,\,\,\,\,\,\forall \, r\geq 0.
\end{equation}
   Integrating from $0$ to $r$ twice yields
	\begin{equation}\label{2-8}
	\overline{v_{\frac{n}{2}-2}}(r)\geq \frac{c_0}{2n}r^2+\overline{v_{\frac{n}{2}-2}}(0)\geq \frac{c_0}{2n}r^2+c_1,\,\,\,\,\,\,\forall \, r\geq 0.
	\end{equation}
	Continuing this way, if $\frac{n}{2}$ is odd, we can derive that
	\begin{equation}\label{2-9}
	\bar{u}(r)\leq -\tilde{c}_{0}r^{n-2}+\sum_{i=1}^{\frac{n}{2}-1}\tilde{c}_{i}r^{2(\frac{n}{2}-1-i)},\,\,\,\,\,\,\forall \, r\geq 0,
	\end{equation}
	where $\tilde{c}_0>0$ and $\tilde{c}_{\frac{n}{2}-1}=\bar{u}(0)=u(x_{0})\geq0$. Then, similar to \eqref{2-5}, by the definition of $\bar{u}$ and \eqref{2-9}, for $r$ large, we obtain
	\begin{eqnarray}\label{2-10}
	&&0\leq \frac{1}{\omega_n}\int_{B_r{(x_0)}}udx=\int_0^r s^{n-1}\bar{u}(s)ds \\
\nonumber &\leq&-\frac{\tilde{c}_{0}}{2n-2}r^{2n-2}+\sum_{i=1}^{\frac{n}{2}-1}\frac{\tilde{c}_{i}}{2(n-1-i)}r^{2(\frac{n}{2}-1-i)+n}<0,
	\end{eqnarray}
	which is absurd. Hence in the following, we assume that $\frac{n}{2}$ is even.
	
	Set $w_0(r)=\bar{u}(r)$ and $w_k(r)=\Delta^k\bar{u}(r)$, $k=1,\cdots,\frac{n}{2}-1$.
	By \eqref{2-2}, we have $w_{\frac{n}{2}-1}(0)=-\overline{v_{\frac{n}{2}-1}}(0)>0$ and $w_k$ satisfies
	\begin{equation}\label{2-11}
	\begin{cases}
	\Delta w_{\frac{n}{2}-1}(r)=\overline{|x|^{a}|u|^{p-1}u}(r),\\
	\Delta w_k(r)=w_{k+1}(r),\,\,\,\,\,\,k=0,\cdots,\frac{n}{2}-2.
	\end{cases}
	\end{equation}
	We divide \eqref{2-11} into two parts. Let $w_k:=u_k+\phi_k$, $k=0,\cdots,\frac{n}{2}-1$, where $u_k$ satisfies
	\begin{equation}\label{2-12}
	\begin{cases}
	\Delta u_{\frac{n}{2}-1}(r)=\overline{|x|^{a}|u|^{p-1}u}(r),\\
	\Delta u_k(r)=u_{k+1}(r),\,\,\,\,\,\,k=0,\cdots,\frac{n}{2}-2,\\
	u_k(0)=0,\,\,\,\,\,\,k=0,\cdots,\frac{n}{2}-1,
	\end{cases}
	\end{equation}
	and $\phi_k$ solves
	\begin{equation}\label{2-13}
	\begin{cases}
	\Delta \phi_{\frac{n}{2}-1}(r)=0,\\
	\Delta \phi_k(r)=\phi_{k+1}(r),\,\,\,\,\,\,k=0,\cdots,\frac{n}{2}-2,\\
	\phi_k(0)=w_k(0),\,\,\,\,\,\,k=0,\cdots,\frac{n}{2}-1.
	\end{cases}
	\end{equation}
	From \eqref{2-13}, by direct calculations, we have
	\begin{equation}\label{2-14}
	\phi_0(r)=\sum_{k=0}^{\frac{n}{2}-1}c_kw_k(0)r^{2k},
	\end{equation}
	where $c_k>0,\,\,\,k=0,\cdots,\frac{n}{2}-1$. It is easy to see that
\begin{equation}\label{5-5}
  \phi_0(r)\geq c_0\bar{u}(0)+c_{\frac{n}{2}-1}w_{\frac{n}{2}-1}(0)r^{n-2}-\sum_{k=1}^{\frac{n}{2}-2}c_k|w_k(0)|r^{2k}.
\end{equation}
Let
\begin{equation}\label{5-6}
  u_\lambda(x)=\lambda^{\frac{n+a}{p-1}}u(\lambda x)
\end{equation}
be the re-scaling of $u$. Then one can verify that $u_\lambda$ still satisfies the equation
\begin{equation}\label{5-7}
  \Delta^{\frac{n}{2}} u_\lambda=|x|^a|u_\lambda|^{p-1}u_\lambda, \,\,\quad\, x\in\mathbb{R}^n.
\end{equation}
Let $w_{0,\lambda}(r)=\overline{u_\lambda}(r)\,\text{and}\,w_{k,\lambda}(r)=\Delta^k\overline{u_\lambda}(r),\,\,\,k=1,\cdots,\frac{n}{2}-1$, where the spherical average is taken with respect to the center $x_{0,\lambda}:=\frac{x_0}{\lambda}$, we have
\begin{equation}\label{5-8}
  w_{k,\lambda}(0)=\lambda^{\frac{n+a}{p-1}+2k}w_k(0).
\end{equation}
Similar to $w_k$, we decompose $w_{k,\lambda}:=u_{k,\lambda}+\phi_{k,\lambda},\,\,\,k=0,\cdots,\frac{n}{2}-1$, where $u_{k,\lambda}$ and $\phi_{k,\lambda}$ still satisfy \eqref{2-12} and \eqref{2-13} respectively if we substitute $u_\lambda$ and $w_{k,\lambda}$ for $u$ and $w_k$. Similar to \eqref{5-5}, we can still conclude that
\begin{equation}\label{2-15}
	\phi_{0,\lambda}(r)\geq \lambda^{\frac{n+a}{p-1}}\left(c_0\bar{u}(0)+c_{\frac{n}{2}-1}w_{\frac{n}{2}-1}(0)\lambda^{n-2}r^{n-2}-\sum_{k=1}^{\frac{n}{2}-2}c_k|w_k(0)|\lambda^{2k}r^{2k}\right).
\end{equation}
	
Next we use the iteration argument to derive a contradiction.
	
Set $\Omega_{\tau}^+=B_\tau(x_{0,\lambda})\cap\mathbb{R}_+^n,\Omega_{\tau}^-=B_\tau(x_{0,\lambda})\cap\left(\mathbb{R}^n\setminus\mathbb{R}_+^n\right)$. Let $\widetilde{\Omega_{\tau}^-}$ be the reflection of $\Omega_{\tau}^-$ with respect to $\partial\mathbb{R}_+^n$ and $\Omega_\tau=\Omega_{\tau}^+\setminus\widetilde{\Omega_{\tau}^-}$.
	
We can choose $\lambda$ large such that $|x_{0,\lambda}|<\frac{1}{4}$, then it is easy to see that given $1\leq \tau \leq2$, for all $x\in \Omega_\tau$, we have $|x|^{a}\geq(1-|x_{0,\lambda}|)^a>(\frac{3}{4})^a$ if $a\geq0$, and $|x|^{a}\geq(2+|x_{0,\lambda}|)^a>(\frac{9}{4})^a$ if $a<0$. By the first equation of \eqref{2-12}, Jensen's inequality and odd symmetry of $u_\lambda$ with respect to $\partial \mathbb{R}^n_+$, we have, for any $1\leq r\leq 2$,
	\begin{align}\label{2-16}
	u_{\frac{n}{2}-1,\lambda}(r)&=\int_0^r \frac{1}{\tau^{n-1}}\int_0^{\tau}s^{n-1}\overline{|x|^{a}|u_\lambda|^{p-1}u_\lambda}dsd\tau\\
	&=\int_0^r \frac{1}{\tau^{n-1}}\int_0^{\tau}s^{n-1}\frac{1}{\partial B_{s}(x_{0,\lambda})}\int_{\partial B_{s}(x_{0,\lambda})}|x|^a|u_\lambda|^{p-1}u_\lambda(x) d\sigma ds d\tau\nonumber\\
	&=\frac{1}{\omega_n}\int_0^r \frac{1}{\tau^{n-1}}\int_{B_\tau(x_{0,\lambda})}|x|^{a}|u_\lambda|^{p-1}u_\lambda(x)dxd\tau\nonumber\\
	&=\frac{1}{\omega_n}\int_0^r \frac{1}{\tau^{n-1}}\int_{\Omega_\tau}|x|^{a}u_\lambda^{p}(x)dxd\tau\geq C'_0\int_1^r\frac{1}{\tau^{n-1}}\int_{\Omega_\tau}u_\lambda^{p}(x)dxd\tau\nonumber \\
	&=C'_0\int_1^r\frac{|\Omega_\tau|}{\tau^{n-1}}\left(\frac{1}{|\Omega_\tau|}\int_{\Omega_\tau}u_\lambda^{p}(x)dx\right)d\tau\nonumber\\
	&\geq C'_0\int_1^r\frac{|\Omega_\tau|}{\tau^{n-1}}\left(\frac{1}{|\Omega_\tau|}\int_{\Omega_\tau}u_\lambda(x)dx\right)^{p}d\tau\nonumber\\
	&=C'_0\int_1^r\frac{1}{|B_\tau(x_{0,\lambda})|^{p-1}\tau^{n-1}}\left(\frac{|B_\tau(x_{0,\lambda})|}{|\Omega_\tau|}\right)^{p-1}
\left(\int_{\Omega_\tau}u_\lambda(x)dx\right)^{p}d\tau\nonumber\\
	&\geq C''_0\int_1^r\frac{1}{\tau^{np-p}}\left(\int_{B_\tau(x_{0,\lambda})}u_\lambda(x)dx\right)^{p}d\tau\nonumber\\
	&= C''_0\int_1^r\frac{1}{\tau^{np-p}}\left(\int_0^\tau \int_{\partial B_s(x_{0,\lambda})}u_\lambda(x)d\sigma ds\right)^{p}d\tau\nonumber\\
	&=: C_0\int_1^r\frac{1}{\tau^{np-p}}\left(\int_0^\tau \overline{u_\lambda}(s)s^{n-1} ds\right)^{p}d\tau,\nonumber
	\end{align}
where $C_0\in(0,1]$ is a positive constant depending only on $n, \,p \, \text{and} \, a$ if $\lambda$ is large enough.
	
It follows from \eqref{2-15} and $\frac{n+a}{p-1}+n-2>0$ that, we can choose $\lambda$ sufficiently large to make $\phi_{0,\lambda}(r)$ as large as we wish for $1\leq r \leq 2$. Apparently, $u_{0,\lambda}(r)\geq 0$, from $\overline{u_\lambda}=u_{0,\lambda}+\phi_{0,\lambda}$ and \eqref{2-15}, we can choose $\lambda$ sufficiently large such that
	\begin{equation}\label{2-17}
	\overline{u_\lambda}(r)\geq a_0(r-1)^{\sigma_0},\,\,\,\,\,\, \forall \, 1\leq r \leq 2,
	\end{equation}
	where $a_0$ and $\sigma_0$ are arbitrarily large, and will be determined later. By elementary calculation, it is easy to verify that
	\begin{equation}\label{2-18}
	\int_1^\tau (s-1)^{\alpha}s^{\beta}ds\geq \frac{1}{\alpha+\beta+1}(\tau-1)^{\alpha+1}{\tau}^\beta,\,\,\,\,\,\,\,\,\,\forall \,\alpha,\,\beta>0.
	\end{equation}
	By \eqref{2-16}, \eqref{2-17} and \eqref{2-18}, for any $1\leq r \leq2$ and $p>1$, we obtain
	\begin{align}\label{2-19}
	u_{\frac{n}{2}-1,\lambda}(r)&\geq C_0\int_1^r\frac{1}{\tau^{p(n-1)}}\left(\int_0^\tau \overline{u_\lambda}(s)s^{n-1} ds\right)^{p}d\tau\\
	&\geq  C_0\int_1^r\frac{1}{\tau^{p(n-1)}}\left(\int_1^\tau a_0(s-1)^{\sigma_0}s^{n-1} ds\right)^{p}d\tau\nonumber\\
	&\geq  C_0\int_1^r\frac{1}{\tau^{p(n-1)}}\left(\frac{a_0}{\sigma_0+n}(\tau-1)^{\sigma_0+1}\tau^{n-1}\right)^{p}d\tau\nonumber\\
	&\geq \frac{C_0a_0^p}{(\sigma_0+n)^p} \int_1^r (\tau-1)^{(\sigma_0+1)p}d\tau\nonumber\\
	&= \frac{C_0a_0^p}{(\sigma_0+n)^p[(\sigma_0+1)p+1]} (r-1)^{(\sigma_0+1)p+1}.\nonumber
	\end{align}
	Set $\sigma_0$ large such that $\sigma_0\geq p+2n$, then \eqref{2-19} implies that
	\begin{equation}\label{2-20}
	u_{\frac{n}{2}-1,\lambda}(r) \geq \frac{C_0a_0^p}{(2\sigma_0)^p(2\sigma_0p)} (r-1)^{(\sigma_0+1)p+1}.
	\end{equation}
	From the second equation in \eqref{2-12} and \eqref{2-20}, we have
	\begin{equation}\label{2-21}
	(r^{n-1}u_{\frac{n}{2}-2,\lambda}')'(r)=r^{n-1}u_{\frac{n}{2}-1,\lambda}(r)\geq \frac{C_0a_0^p}{(2\sigma_0)^p(2\sigma_0p)} (r-1)^{(\sigma_0+1)p+1}r^{n-1},\,\,\,\,\,\,\forall \, 1\leq r\leq 2.
	\end{equation}
	Since $(r^{n-1}u_{\frac{n}{2}-2,\lambda}')'(r)=r^{n-1}u_{\frac{n}{2}-1,\lambda}(r)\geq0$ for any $r\geq 0$, by \eqref{2-21}, we derive
	\begin{align}\label{2-22}
	r^{n-1}u_{\frac{n}{2}-2,\lambda}'(r)&=\int_0^r (\tau^{n-1}u'_{\frac{n}{2}-2,\lambda}(\tau))'d\tau\nonumber\\
	&\geq \frac{C_0a_0^p}{(2\sigma_0)^p(2\sigma_0p)}\int_1^r (\tau-1)^{(\sigma_0+1)p+1}\tau^{n-1}d\tau\\
	&\geq \frac{C_0a_0^p}{(2\sigma_0)^p(2\sigma_0p)[(\sigma_0+1)p+n+1]}(r-1)^{(\sigma_0+1)p+2}r^{n-1},\,\,\,\,\,\,\,\forall \, 1\leq r \leq 2,\nonumber
	\end{align}
	which means
	\begin{equation}\label{2-23}
	u_{\frac{n}{2}-2,\lambda}'(r)\geq \frac{C_0a_0^p}{(2\sigma_0)^p(2\sigma_0p)[(\sigma_0+1)p+n+1]}(r-1)^{(\sigma_0+1)p+2}.
	\end{equation}
	By $(r^{n-1}u_{\frac{n}{2}-2,\lambda}')'(r)=r^{n-1}u_{\frac{n}{2}-1,\lambda}\geq0$ for any $r\geq 0$, we have
\begin{equation}\label{5-9}
  r^{n-1}u_{\frac{n}{2}-2,\lambda}'(r)=\int_0^r \tau^{n-1}u_{\frac{n}{2}-1,\lambda}(\tau)d\tau\geq0,\,\,\,\forall \, r\geq 0,
\end{equation}
thus $u_{\frac{n}{2}-2,\lambda}'(r)\geq0, \,\,\,\forall \, r\geq 0$. Similar to \eqref{2-22}, by \eqref{2-23}, we have
	\begin{align}\label{2-24}
	u_{\frac{n}{2}-2,\lambda}(r)&=\int_0^r u_{\frac{n}{2}-2,\lambda}'(\tau)d\tau\nonumber\\
	&\geq \int_1^r u_{\frac{n}{2}-2,\lambda}'(\tau)d\tau\nonumber\\
	&\geq \frac{C_0a_0^p}{(2\sigma_0)^p(2\sigma_0p)[(\sigma_0+1)p+n+1]}\int_1^r (\tau-1)^{(\sigma_0+1)p+2}d\tau\\
	&\geq \frac{C_0a_0^p}{(2\sigma_0)^p(2\sigma_0p)[(\sigma_0+1)p+n+1][(\sigma_0+1)p+3]}(r-1)^{(\sigma_0+1)p+3}\nonumber\\
	&\geq \frac{C_0a_0^p}{(2\sigma_0)^p(2\sigma_0p)^3}(r-1)^{(\sigma_0+1)p+3},\,\,\,\,\,\,\forall \, 1\leq r \leq 2.\nonumber
	\end{align}
	
	Continuing this way, we eventually obtain that
	\begin{equation}\label{2-25}
	u_{0,\lambda}(r)\geq \frac{C_0a_0^p}{(2\sigma_0)^p(2\sigma_0p)^{n-1}}(r-1)^{(\sigma_0+1)p+n-1},\,\,\,\quad \forall \, 1\leq r \leq 2.
	\end{equation}
	Thus, we have
	\begin{equation}\label{2-26}
	\overline{u_{\lambda}}(r)\geq \frac{C_0a_0^p}{(2\sigma_0)^p(2\sigma_0p)^{n-1}}(r-1)^{(\sigma_0+1)p+n-1},\,\,\,\quad \forall \, 1\leq r \leq 2.
	\end{equation}
	
	We set $\sigma_1:=2p\sigma_0$ and $a_1=\frac{C_0a_0^p}{(2\sigma_0p)^{n-1+p}}$. Then, for any $1\leq r \leq 2$, by \eqref{2-26}, we have
	\begin{equation*}
	\overline{u_{\lambda}}(r)\geq a_1(r-1)^{\sigma_1},\,\,\,\,\, \forall \, 1\leq r\leq 2.
	\end{equation*}
	
	Repeating the above arguments, we have
	\begin{equation}\label{2-27}
	\overline{u_{\lambda}}(r)\geq a_k(r-1)^{\sigma_k},\,\,\,\,\, \forall \, 1\leq r\leq 2,
	\end{equation}
	where $\sigma_k=2p\sigma_{k-1}$, $a_k=\frac{C_0a_{k-1}^p}{(2\sigma_{k-1}p)^{n-1+p}}$ and $k=2,3,\cdots$.
	
	We can prove that $a_k\rightarrow +\infty$ as $k\rightarrow+\infty$. In fact, by direct calculations, we have
	\begin{align}\label{2-28}
	a_k&=\frac{C_0^{\frac{p^k-1}{p-1}}a_0^{p^k}}{(2p)^{(n-1+p)(k+(k-1)p+\cdots+p^{k-1})}\sigma_0^{(n-1+p)\frac{p^k-1}{p-1}}}\\
&=C_0^{\frac{-1}{p-1}}(2p)^{(n-1+p)\left(\frac{p}{(p-1)^2}+\frac{k}{p-1}\right)}\sigma_0^{\frac{n-1+p}{p-1}}\left( \frac{C_0^{\frac{1}{p-1}}a_0}{(2p)^{\frac{(n-1+p)p}{(p-1)^2}}\sigma_0^{\frac{n-1+p}{p-1}}}\right)^{p^k}. \nonumber
	\end{align}
	Take $a_0=2C_0^{-\frac{1}{p-1}}(2p)^{\frac{(n-1+p)p}{(p-1)^2}}\sigma_0^{\frac{n-1+p}{p-1}}$, then by \eqref{2-27} and \eqref{2-28}, we can see that
	\begin{equation*}
	\overline{u_{\lambda}}(2)\geq a_k \geq (2p)^{\frac{(n-1+p)k}{p-1}}2^{p^k} \rightarrow +\infty,\,\,\,\,\,\,\text{as}\,\, k\rightarrow\infty,
	\end{equation*}
	which is absurd! Thus \eqref{2-1} must hold, that is, $(-\Delta)^{\frac{n}{2}-1}u\geq 0$ in $\overline{\mathbb{R}^{n}_{+}}$.
	
	\textbf{\emph{Step 2.}}
	Next, we will show that all the other $v_k(x)\geq 0,\,k=1,2,\cdots,\frac{n}{2}-2,\,\forall x\in\overline{\mathbb{R}^{n}_+}$. Suppose not, then there exists some $2\leq i\leq \frac{n}{2}-1$ and $x_0 \in \mathbb{R}^{n}_+$ such that
	\begin{gather}
	v_{\frac{n}{2}-1}(x)\geq 0,\,\,\,\,v_{\frac{n}{2}-2}(x)\geq 0,\,\,\,\, \cdots,\,\,\,\, v_{\frac{n}{2}-i+1}(x)\geq 0, \quad\,\, \forall\, x\in\mathbb{R}^{n}_+,\label{2-29}\\
	v_{\frac{n}{2}-i}(x_0)<0.\label{2-30}
	\end{gather}
	
	Take spherical average with respect to the center $x_0$, we have $\overline{v_{\frac{n}{2}-i}}$ satisfies
	\begin{equation}\label{2-31}
	\left\{{\begin{array}{l} {-\Delta\overline{v_{\frac{n}{2}-i}}(r)=\overline{v_{\frac{n}{2}-i+1}}}, \\  {} \\ {-\Delta\overline{v_{\frac{n}{2}-i-1}}(r)=\overline{v_{\frac{n}{2}-i}}(r)}, \\ \cdots\cdots \\ {-\Delta\overline u(r)=\overline{v_1}(r)}. \\ \end{array}}\right.
	\end{equation}
	By the first equation of \eqref{2-31}, integrating both sides from $0$ to $r$, we arrive at
	\begin{align}\label{2-32}
	-r^{n-1}\overline{v_{\frac{n}{2}-i}}'(r)&=\int_{0}^{r} s^{n-1}\overline{v_{\frac{n}{2}-i+1}}ds\nonumber\\
	&=\frac{1}{\omega_n}\int_0^r \int_{\partial B_{s}(x_0)}v_{\frac{n}{2}-i+1}\,d\sigma ds\\
	&=\frac{1}{\omega_n}\int_{B_{r}(x_0)}v_{\frac{n}{2}-i+1}\,dx\geq 0.\nonumber
	\end{align}
	Here we have used the fact that, since $x_0\in\mathbb{R}^{n}_{+}$, more than half of the ball $B_{r}(x_0)$ is contained in $\mathbb{R}^n_+$, so \eqref{2-32} follows from the odd symmetry of $v_{\frac{n}{2}-i+1}$ with respect to $\partial \mathbb{R}^n_+$.
	
As in Step 1, we can also derive a contradiction if $\frac{n}{2}-i$ is even, hence we assume that $\frac{n}{2}-i$ is odd hereafter. By the same arguments as in deriving \eqref{2-9} in Step 1, we can obtain that
	\begin{equation}\label{2-33}
	\bar{u}(r)\geq c_{0}r^{2(\frac{n}{2}-i)}+\sum_{j=1}^{\frac{n}{2}-i}c_{j}r^{2(\frac{n}{2}-i-j)},\,\,\,\quad \forall \, r\geq 0,
	\end{equation}
where $c_{0}>0$ and $c_{\frac{n}{2}-i}=\bar{u}(0)=u(x_{0})\geq0$. Therefore, if we assume that $u(x)=o(|x|^{2})$ as $|x|\rightarrow+\infty$, we will get a contradiction from \eqref{2-33} immediately. We only need to discuss under the assumptions $-2p-2\leq a<+\infty$ hereafter.
	
Notice that there exists $r_0$ large enough such that if $r\geq r_0$, then $\forall x\in \Omega_r$, $|x|^a\geq (r-|x_0|)^a\geq Cr^a$ if $a\geq 0$, and  $|x|^a\geq (r+|x_0|)^a\geq Cr^a$ if $a<0$, furthermore, by \eqref{2-33}, we can also get, if $r\geq r_0$,
\begin{equation}\label{2-33'}
	\bar{u}(r)\geq \frac{c_{0}}{2}r^{2(\frac{n}{2}-i)}.
\end{equation}
Similar to \eqref{2-16}, by \eqref{2-5}, Jensen's inequality and odd symmetry of $u$ with respect to $\partial \mathbb{R}^n_+$, we have for any $r\geq r_0$,
	\begin{align}\label{2-34}
	-r^{n-1}\overline{v_{\frac{n}{2}-1}}'(r)
	&=\frac{1}{\omega_n}\int_{B_{r}(x_0)}|x|^{a}|u|^{p-1}u\,dx\\
	&=\frac{1}{\omega_n}\int_{\Omega_r} |x|^{a}u^{p}\,dx\geq \frac{Cr^a}{\omega_n}\int_{\Omega_r} u^{p}\,dx\nonumber\\
	&\geq \frac{C|\Omega_r|r^a}{\omega_n}\left( \frac{1}{|\Omega_r|} \int_{\Omega_r} u^{p}\,dx\right)\nonumber\\
	&\geq \frac{C|\Omega_r|r^a}{\omega_n}\left( \frac{1}{|\Omega_r|} \int_{\Omega_r} u\,dx\right)^{p}\nonumber\\
	&= Cr^a\left(\frac{|B_r(x_0)|}{|\Omega_r|}\right)^{p-1} \frac{1}{\omega_n|B_r(x_0)|^{p-1}}\left(\int_{B_r(x_0)} u\,dx\right)^{p}\nonumber\\
	&\geq  \frac{Cr^a}{\omega_n|B_r(x_0)|^{p-1}}\left(\int_{B_r(x_0)} u\,dx\right)^{p}\nonumber\\
	&=\frac{Cr^a}{\omega_n|B_r(x_0)|^{p-1}}\left(\int_0^r \int_{\partial B_s(x_0)} u\,d\sigma\,ds \right)^{p}\nonumber\\
	&=\frac{C\omega_n^{p-1}r^a}{|B_r(x_0)|^{p-1}}\left(\int_0^r \bar{u}(s)s^{n-1}\,ds \right)^{p}\nonumber\\
	&=\frac{C}{r^{np-n-a}}\left(\int_0^r \bar{u}(s)s^{n-1}\,ds \right)^{p}.\nonumber
	\end{align}
	Combining \eqref{2-34} with \eqref{2-33'}, we have, for all $r\geq 2r_0$,
	\begin{eqnarray}\label{2-35}
	-r^{n-1}\overline{v_{\frac{n}{2}-1}}'(r)
	&\geq&\frac{C}{r^{np-n-a}}\left(\int_0^r \bar{u}(s)s^{n-1}\,ds \right)^{p}\\
	&\geq& \frac{C}{r^{np-n-a}}\left(\int_{r_0}^r s^{2(\frac{n}{2}-i)}s^{n-1}\,ds \right)^{p}\nonumber\\
	&\geq& \frac{C}{r^{np-n-a}}r^{np+2(\frac{n}{2}-i)p}\nonumber\\
	&\geq& Cr^{n+2(\frac{n}{2}-i)p+a}.\nonumber
	\end{eqnarray}
	That is,
	\begin{equation}\label{2-36}
	\overline{v_{\frac{n}{2}-1}}'(r)\leq -Cr^{2(\frac{n}{2}-i)p+a+1}.
	\end{equation}
	Integrating \eqref{2-36} in both sides from fixed $r_1\geq 2r_0$ to $r$, we obtain, if $a>-2-2p$, then
	\begin{equation}\label{2-37}
	\overline{v_{\frac{n}{2}-1}}(r)\leq -C\left( r^{2p+a+2}-r_{1}^{2p+a+2}\right)+\overline{v_{\frac{n}{2}-1}}(r_1)\rightarrow-\infty, \,\,\,\,\,\, \text{as} \,\,\, r\rightarrow+\infty;
	\end{equation}
	if $a=-2-2p$, then
	\begin{equation}\label{2-37'}
	\overline{v_{\frac{n}{2}-1}}(r)\leq -C\left( \ln r-\ln r_{1}\right)+\overline{v_{\frac{n}{2}-1}}(r_1)\rightarrow-\infty, \,\,\,\,\,\, \text{as} \,\,\, r\rightarrow+\infty.
	\end{equation}
However, from the proven fact that $v_{\frac{n}{2}-1}(x)\geq 0, x\in\overline{\mathbb{R}_+^n}$, the odd symmetry of $v_{\frac{n}{2}-1}$ with respect to $\partial \mathbb{R}^{n}_+$ and \eqref{2-37}, \eqref{2-37'}, we get that
	\begin{eqnarray}\label{2-38}
	0&\leq& \frac{1}{\omega_n}\int_{B_{r}{(x_0)}} v_{\frac{n}{2}-1}(x)dx \\
	&=&\frac{1}{\omega_n}\int_{B_{r_1}{(x_0)}} v_{\frac{n}{2}-1}(x)dx+\frac{1}{\omega_n}\int_{{B_{r}{(x_0)}}\setminus B_{r_1}{(x_0)}} v_{\frac{n}{2}-1}(x)dx\nonumber\\
	&=&\frac{1}{\omega_n}\int_{B_{r_1}{(x_0)}} v_{\frac{n}{2}-1}(x)dx+\int_{r_1}^{r}s^{n-1}\overline{v_{\frac{n}{2}-1}}(s)ds\rightarrow-\infty, \,\,\,\,\,\, \text{as} \,\,\, r\rightarrow+\infty. \nonumber
	\end{eqnarray}
This is a contradiction! Therefore, we arrive at $v_k(x)\geq 0,\,k=1,2,\cdots,\frac{n}{2}-2,\,\forall \,x\in\overline{\mathbb{R}^{n}_+}$.

This finishes our proof of Theorem \ref{Thm0}.

\section{Equivalence between PDE and IE}

In this section, we prove the equivalence between PDE \eqref{NPDE} and IE \eqref{IE}, namely, Theorem \ref{equivalence}. We only need to prove that any nonnegative solution of PDE \eqref{NPDE} also satisfies IE \eqref{IE}.

\emph{(i)} We first consider the cases that $n\geq4$ is even.

In section 2, we have proved that $v_{i}=(- \Delta)^{i}u\geq 0$ for $i=1,2,\cdots,\frac{n}{2}-1$, then \eqref{NPDE} is equivalent to the following system
\begin{equation}\label{PDES}
\left\{{\begin{array}{l} {-\Delta v_{\frac{n}{2}-1}(x)=|x|^{a}|u|^{p}(x)},  \\
	{-\Delta v_{\frac{n}{2}-2}(x)= v_{\frac{n}{2}-1}(x)}, \\ \cdots\cdots \\ {-\Delta u(x)= v_1(x)}. \\ \end{array}}\right.
\end{equation}
In the following, similar as in \cite{CFL}, we define
\begin{equation}\label{6-1}
  G(x,y,i):=c_n\left(\frac{1}{|x-y|^{n-i}}-\frac{1}{|\bar {x}-y|^{n-i}}\right),\,\,\,\,\, x,y\in \mathbb{R}^n_+,\,\,\,i=2,4,\ldots,n-2,
\end{equation}
and $G^+(x,y):=G(x,y,n)=C_{n}\left(\ln{\frac{1}{|x-y|}}-\ln{\frac{1}{|\bar{x}-y|}}\right)$.

Let $B_R^+=B_R(0)\cap R_+^n$. In \cite{CC}, Cao and Chen derived the Green's function $G_R(x,y,2)$ associated with $-\Delta$ for the half ball $B_R^+$, that is,
\begin{multline*}
G_R(x,y,2):=\frac{c_n}{|x-y|^{n-2}}-\frac{c_n}{\left(|x-y|^2+(R-\frac{|x|^2}{R})(R-\frac{|y|^2}{R})\right)^{\frac{n-2}{2}}}\\
-\left(\frac{c_n}{|\bar{x}-y|^{n-2}}-\frac{c_n}{\left(|\bar{x}-y|^2+(R-\frac{|x|^2}{R})(R-\frac{|y|^2}{R})\right)^{\frac{n-2}{2}}}\right).
\end{multline*}
Now we list some essential properties of the above Green's functions, which have been proved in \cite{CC} and \cite{CFL}. First,
\begin{equation}\label{3-4}
G_R(x,y,2)\rightarrow G(x,y,2)=\frac{c_n}{|x-y|^{n-2}}-\frac{c_n}{|\bar{x}-y|^{n-2}}, \,\,\,\,\,\,\text{as}\,\,R\rightarrow+\infty.
\end{equation}
Second, let $\Gamma_R$ be the hemisphere part of $\partial B_R^+$, then for each fixed $x \in B_R^+$ and any $y \in \Gamma_R$,
\begin{equation}\label{3-5}
\frac{\partial G_R(x,y,2)}{\partial\nu_y}=(2-n)R\left(1-\frac{|x|^2}{R^2}\right)\left(\frac{1}{|x-y|^{n}}-\frac{1}{|\bar{x}-y|^{n}} \right)<0,
\end{equation}
moreover, for each fixed $x \in B_R^+$, any $y \in \Gamma_R$ and $R$ sufficiently large,
\begin{gather}
G(x,y,2)=\frac{c_n}{|x-y|^{n-2}}-\frac{c_n}{|\bar{x}-y|^{n-2}} \sim \frac{y_n}{R^{n}}, \quad \text{as} \,\,\, R\rightarrow+\infty,\label{3-5-1}\\
\left|\frac{\partial G_R(x,y,2)}{\partial\nu_y}\right|=(n-2)R\left(1-\frac{|x|^2}{R^2}\right)\left(\frac{1}{|x-y|^{n}}-\frac{1}{|\bar{x}-y|^{n}} \right) \sim \frac{y_n}{R^{n+1}}.\label{3-5-2}
\end{gather}

In \cite{CFL}, the authors prove the following property of Green's functions with different orders on a half space:
\begin{equation}\label{3-6}
G(x,y,2k)=\int_{\mathbb{R}_+^n} G(x,z,2)G(z,y,2(k-1))dz,\,\,\,\quad \forall \, k=2,\cdots,\frac{n}{2}-1.
\end{equation}

To continue, we need to prove that the above property is still valid for the critical order $k=\frac{n}{2}$, this is the following crucial lemma.
\begin{lem}\label{lem3-0}
	Assume that $n\geq 4$ is even, then
	\begin{equation}\label{3-6-1}
	G^+(x,y)=\int_{\mathbb{R}_+^n} G(x,z,2)G(z,y,n-2)dz.
	\end{equation}
\end{lem}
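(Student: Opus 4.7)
The plan is to identify both sides as the Green's function of the same Navier boundary value problem for $(-\Delta)^{n/2}$ on $\mathbb{R}^n_+$, and then invoke a Liouville-type uniqueness argument. Write $F(x,y) := \int_{\mathbb{R}_+^n} G(x,z,2) G(z,y,n-2)\,dz$ for the right-hand side of \eqref{3-6-1}.

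I would first show that the integral defining $F(x,y)$ is absolutely convergent for $x\neq y$ in $\mathbb{R}^n_+$. The pointwise singularities at $z=x$ and $z=y$ are of the integrable types $|x-z|^{-(n-2)}$ and $|z-y|^{-2}$ respectively (recalling $n\geq 4$). At infinity, the reflection cancellations provide
$$G(x,z,2)\leq C\,\frac{x_n z_n}{|x-z|^n},\qquad G(z,y,n-2)\leq C\,\frac{z_n y_n}{|z-y|^4},$$
so the integrand decays as $O(|z|^{-(n+2)})$. With convergence in hand, I would differentiate under the integral sign, using $-\Delta_x G(x,z,2)=\delta_x(z)$ on $\mathbb{R}^n_+$ with Dirichlet boundary data together with the iteration formula \eqref{3-6}, to derive
$$(-\Delta_x)^k F(x,y)=G(x,y,n-2k)\quad\text{for}\,\, k=1,\ldots,\tfrac{n}{2}-1,$$
and $(-\Delta_x)^{n/2}F(x,y)=\delta_y(x)$ in the distributional sense. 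Since $G(x,y,n-2k)=0$ whenever $x\in\partial\mathbb{R}^n_+$, the function $F$ satisfies the full Navier boundary conditions.

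A parallel direct computation, using $-\Delta_x\ln|x-y|^{-1}=(n-2)|x-y|^{-2}$ away from $x=y$ together with the normalization of $C_n$ relative to $c_n$, shows that $G^+(x,y)$ satisfies the identical hierarchy $(-\Delta_x)^k G^+(x,y)=G(x,y,n-2k)$ for $k=1,\ldots,\frac{n}{2}-1$, with $(-\Delta_x)^{n/2}G^+=\delta_y$ and the same Navier conditions. Hence $\Phi(x,y):=F(x,y)-G^+(x,y)$ is harmonic in $x$ on $\mathbb{R}^n_+$ and vanishes on $\partial\mathbb{R}^n_+$. To conclude $\Phi\equiv 0$, I would show $\Phi(x,y)\to 0$ as $|x|\to\infty$ with $y$ fixed: for $G^+(x,y)=C_n\ln\frac{|\bar x-y|}{|x-y|}$ this is immediate from $|\bar x-y|/|x-y|\to 1$; for $F$, I would split the integral into a large ball $B_R^+(0)$ (where $G(x,z,2)\to 0$ uniformly for $z\in B_R^+$ as $|x|\to\infty$, so dominated convergence applies) and its complement (where the tail contribution is controlled uniformly in $x$ by the $O(|z|^{-(n+2)})$ decay for $R$ sufficiently large). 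Extending $\Phi$ by odd reflection across $\partial\mathbb{R}^n_+$ then produces a bounded harmonic function on all of $\mathbb{R}^n$ vanishing at infinity, which must be identically zero by Liouville's theorem; hence $F=G^+$.

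The main obstacle is the decay estimate for $F(x,y)$ as $|x|\to\infty$, which requires constructing an integrable $z$-majorant that is uniform in $x$ by exploiting the refined reflection bound $G(x,z,2)\lesssim \min\{|x-z|^{-(n-2)},\,x_n z_n |x-z|^{-n}\}$. A secondary technical point is the rigorous justification of differentiating under the integral sign at the critical order $k=n/2$, where the resulting Laplacian produces a Dirac source that must be interpreted distributionally.
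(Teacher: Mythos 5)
Your proposal takes a genuinely different route from the paper. The paper proves the identity directly: starting from $-\Delta_y G^+(x,y)=G(x,y,n-2)$, it multiplies by the truncated Green's function $G_R(y,z,2)$ for the half ball, integrates by parts on $B_R^+\setminus B_\epsilon(z)$, estimates the boundary terms on $\partial B_\epsilon(z)$ and on $\Gamma_R$ explicitly, and sends $\epsilon\to 0$, $R\to\infty$. You instead characterize both sides as solutions of the same boundary value problem, form $\Phi=F-G^+$, show $\Phi$ is harmonic with zero Dirichlet data and zero limit at infinity, and kill it by odd reflection plus Liouville. Conceptually this works and it is a legitimate alternative: what it buys is a clean uniqueness statement, what it costs is that you must separately establish decay of $F$ at infinity — a point the paper's truncated-domain scheme handles automatically via the boundary estimate $\left|\partial_{\nu_y}G_R(y,z,2)\right|\sim y_n/R^{n+1}$ on $\Gamma_R$.

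There are two things you should fix. First, you only need $k=1$: once $-\Delta_x F=G(x,y,n-2)=-\Delta_x G^+$, harmonicity of $\Phi$ is immediate and the higher powers $(-\Delta_x)^k$ and the full Navier hierarchy are unnecessary baggage. Second, and more substantively, you have misplaced where the real work lies. You label the $k=1$ step ``differentiating under the integral sign'' and flag the $k=n/2$ Dirac source as the ``secondary technical point,'' but the Dirac source actually appears already at $k=1$: $-\Delta_x F(x,y)=\int_{\mathbb{R}_+^n}\bigl(-\Delta_x G(x,z,2)\bigr)G(z,y,n-2)\,dz$ with $-\Delta_x G(x,z,2)=\delta(x-z)$. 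Justifying $-\Delta_x F=G(\cdot,y,n-2)$ in $\mathcal{D}'(\mathbb{R}^n_+)$ is precisely the Green's-representation identity for the half-space Dirichlet problem with the (singular, locally integrable) source $G(\cdot,y,n-2)$, and proving it rigorously — via Fubini plus the Newtonian-potential computation $\int_{\mathbb{R}^n_+}G(x,z,2)(-\Delta\phi(x))\,dx=\phi(z)$ for $\phi\in C_c^\infty(\mathbb{R}^n_+)$, with the requisite $L^1_{loc}$ verification of $F$ near $x=y$ to even set up Fubini — is of the same order of difficulty as the paper's direct integration-by-parts argument. You haven't bypassed the central difficulty of the lemma; you've relabeled it. The proposal is salvageable, but you should spell out the Fubini/representation step and not dismiss it as routine.
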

\begin{proof}
By elementary calculations, one can easily verify that
\begin{gather}
-\Delta_y G^+(x,y)=G(x,y,n-2),\label{3-6-2}\\
G^+(x,y)=0,\,\,\,\,\,\,\, \text{if}\,\, \,x \,\,\, \text{or} \,\,\, y \in \partial\mathbb{R}_+^n.\label{3-6-3}
\end{gather}

For fixed $x,\,z \in B_R^+$, $z\neq x$ and $\epsilon$ sufficiently small, multiplying both side of \eqref{3-6-2} by $G_R(y,z,2)$ and integrating on $B_R^+\setminus B_{\epsilon}(z)$ by parts, we have
\begin{eqnarray}\label{3-6-4}
\text{LHS}&=& \int_{B_R^+\setminus B_{\epsilon}(z)} (-\Delta_y G^+(x,y))G_R(y,z,2)dy\\
&=& \int_{B_R^+\setminus B_{\epsilon}(z)} G^+(x,y)\left(-\Delta_yG_R(y,z,2)\right)dy\nonumber\\
&\,&\,+\int_{\partial B_{\epsilon}(z)} \left\{-\frac{\partial G^+(x,y)}{\partial \nu_y}G_R(y,z,2)+\frac{\partial G_R}{\partial \nu_y}(y,z,2)G^+(x,y)\right\}d\sigma_y\nonumber\\
&\,&\,+\int_{\partial B_R^+} \left\{-\frac{\partial G^+(x,y)}{\partial \nu_y}G_R(y,z,2)+\frac{\partial G_R}{\partial \nu_y}(y,z,2)G^+(x,y)\right\}d\sigma_y\nonumber\\
&=& \int_{\partial B_{\epsilon}(z)} \left\{-\frac{\partial G^+(x,y)}{\partial \nu_y}G_R(y,z,2)+\frac{\partial G_R}{\partial \nu_y}(y,z,2)G^+(x,y)\right\}d\sigma_y\nonumber\\
&\,&\,+\int_{\Gamma_R} \left\{\frac{\partial G_R}{\partial \nu_y}(y,z,2)G^+(x,y)\right\}d\sigma_y\nonumber
\end{eqnarray}

Next, we will estimate the above integrals on the boundary. For arbitrarily fixed $x \in B_R^+$, by mean value theorem, we have
\begin{eqnarray}\label{3-6-5}
G^+(x,y)&=&C_{n}\left(\ln{\frac{1}{|x-y|}}-\ln{\frac{1}{|\bar{x}-y|}}\right)\nonumber\\
&=& \frac{C_{n}}{2}\left\{\ln{\left(|\bar{x}-y|\right)^2}-\ln{\left(|x-y|\right)^2}\right\}\\
&=& C \frac{x_ny_n}{\xi^2} \sim \frac{y_n}{R^2},\,\,\quad \forall \, y\in \partial B^+_R, \quad\,\, \text{as}\, R\rightarrow +\infty,\nonumber
\end{eqnarray}
where $\xi$ is valued between $|x-y|^2$ and $|\bar{x}-y|^2$. By \eqref{3-5-2} and \eqref{3-6-5}, we obtain
\begin{equation}\label{3-6-6}
\left|\int_{\Gamma_R} \left\{\frac{\partial G_R}{\partial \nu_y}(y,z,2)G^+(x,y)\right\}d\sigma_y\right|\sim \frac{y_n^2}{R^{4}} \leq \frac{C}{R^{2}}\rightarrow 0,\,\,\quad \text{as} \,R\rightarrow+\infty.
\end{equation}

Since $\frac{\partial G_R}{\partial \nu_y}(y,z,2) \sim \frac{1}{\epsilon^{n-1}}$, $G_R(y,z,2) \sim \frac{1}{\epsilon^{n-2}}$ on $\partial B_{\epsilon}(z)$ and $G^+(x,y)$ is smooth in $B_{\epsilon}(z)$ for $\epsilon$ sufficiently small, we have
\begin{equation}\label{3-6-7}
\int_{\partial B_{\epsilon}(z)}\left\{-\frac{\partial G^+(x,y)}{\partial \nu_y}G_R(y,z,2)+\frac{\partial G_R}{\partial \nu_y}(y,z,2)G^+(x,y)\right\}d\sigma_y \rightarrow G^+(x,z),
\end{equation}
as $\epsilon \rightarrow 0+$. First let $\epsilon \rightarrow 0+$ and then let $ R\rightarrow+\infty$, combining \eqref{3-6-4}, \eqref{3-6-6} and \eqref{3-6-7}, we derive
\begin{equation}\label{3-6-8}
\text{LHS}\rightarrow G^+(x,z).
\end{equation}

As to the right-hand side, we have
\begin{equation}\label{3-6-9}
\text{RHS}=\int_{B_R^+\setminus B_{\epsilon}(z)} G(x,y,n-2)G_R(y,z,2)dy.
\end{equation}

Since $G_R(y,z,2)$ is smooth in $B_R^+\setminus B_{\epsilon}(z)$, $G(x,y,n-2) \sim \frac{1}{|x-y|^2}$ near $x=y$ and $n\geq4$, we derive from \eqref{3-6-8} that
\begin{equation}\label{3-16}
\int_{B_R^+\setminus B_{\epsilon}(z)} G(x,y,n-2)G_R(y,z,2)dy \leq C <+\infty.
\end{equation}
First let $\epsilon \rightarrow 0$ and then let $ R\rightarrow \infty$, by \eqref{3-4} and dominated convergence theorem, we deduce
\begin{equation}\label{3-6-10}
RHS=\int_{B_R^+\setminus B_{\epsilon}(z)} G(x,y,n-2)G_R(y,z,2)dy \rightarrow \int_{\mathbb{R}_+^n} G(x,y,n-2)G(y,z,2)dy
\end{equation}

Combining \eqref{3-6-8} and \eqref{3-6-10}, we arrive at
\begin{equation}\label{3-6-11}
G^+(x,z)= \int_{\mathbb{R}_+^n} G(x,y,n-2)G(y,z,2)dy
\end{equation}
This completes the proof of Lemma \ref{lem3-0}.
\end{proof}

Now we can prove $u$ is also a solution of the integral equation \eqref{IE} using Lemma \ref{lem3-0}. Multiplying both sides of \eqref{PDES} by $G_R(x,y,2)$ and integrating on $B_R^+$ by parts, we have
\begin{equation}\label{3-7}
\left\{{\begin{array}{l} {\int_{B_R^+}G_R(x,y,2)|y|^{a}u^{p}(y)dy=v_{\frac{n}{2}-1}(x)+\int_{\Gamma_R}v_{\frac{n}{2}-1}(y)\frac{\partial G_R(x,y,2)}{\partial \nu_y}d\sigma_y },  \\{}\\
	{\int_{B_R^+}G_R(x,y,2) v_{\frac{n}{2}-1}(y)dy=v_{\frac{n}{2}-2}(x)+\int_{\Gamma_R}v_{\frac{n}{2}-2}(y)\frac{\partial G_R(x,y,2)}{\partial \nu_y}d\sigma_y }, \\{}\\
	\cdots\cdots \\ {}\\
	{\int_{B_R^+}G_R(x,y,2)v_1(y)dy= u(x)+\int_{\Gamma_R}u(y)\frac{\partial G_R(x,y,2)}{\partial \nu_y}d\sigma_y }. \\ \end{array}}\right.
\end{equation}
By Theorem \ref{Thm0}, \eqref{3-5} and \eqref{3-7}, we derive
\begin{equation}\label{3-8}
\left\{{\begin{array}{l} {\int_{B_R^+}G_R(x,y,2)|y|^{a}u^{p}(y)dy\leq v_{\frac{n}{2}-1}(x)},  \\{}\\
	{\int_{B_R^+}G_R(x,y,2) v_{\frac{n}{2}-1}(y)dy\leq v_{\frac{n}{2}-2}(x)}, \\{}\\
	\cdots\cdots \\ {}\\
	{\int_{B_R^+}G_R(x,y,2)v_1(y)dy\leq u(x)}. \\ \end{array}}\right.
\end{equation}
Letting $R\rightarrow \infty$, and by \eqref{3-4}, we deduce
\begin{equation}\label{3-9}
\left\{{\begin{array}{l} {\int_{\mathbb{R}_+^n}G(x,y,2)|y|^{a}u^{p}(y)dy<\infty},  \\{}\\
	{\int_{\mathbb{R}_+^n}G(x,y,2) v_{\frac{n}{2}-1}(y)dy<\infty}, \\{}\\
	\cdots\cdots \\ {}\\
	{\int_{\mathbb{R}_+^n}G(x,y,2)v_1(y)dy<\infty}. \\ \end{array}}\right.
\end{equation}
By \eqref{3-9}, we conclude that there exists a sequence $R_j\rightarrow \infty$ such that
\begin{align}
&R_j\int_{\Gamma_{R_j}}G(x,y,2)v_i(y)d\sigma_y\rightarrow 0, \quad\quad as\,\, R_j\rightarrow \infty, \,\,\quad i=1,\ldots,\frac{n}{2}-1,\label{3-10-1}\\
&R_j\int_{\Gamma_{R_j}}G(x,y,2)|y|^{a}u(y)d\sigma_y\rightarrow 0, \quad\quad as\,\, R_j\rightarrow \infty.\label{3-10-2}
\end{align}
Then from \eqref{3-5-1}, it follows easily that
\begin{align}
&\frac{1}{R_j^{n-1}}\int_{\Gamma_{R_j}}v_i(y)y_n d\sigma_y\rightarrow 0, \quad\quad as\,\, R_j\rightarrow \infty, \,\,\quad i=1,\ldots,\frac{n}{2}-1,\label{3-11-1}\\
&\frac{1}{R_j^{n-1-a}}\int_{\Gamma_{R_j}}u^{p}(y)y_{n}d\sigma_y\rightarrow 0, \quad\quad as\,\, R_j\rightarrow \infty.\label{3-11-2}
\end{align}
As an immediate consequence of \eqref{3-11-1}, we have
\begin{equation}\label{3-12}
\frac{1}{R_j^{n+1}}\int_{\Gamma_{R_j}}v_i(y)y_n d\sigma_y\rightarrow 0, \quad\quad as\,\, R_j\rightarrow \infty, \,\,\quad i=1,\ldots,\frac{n}{2}-1.
\end{equation}
By \eqref{3-11-2}, H\"{o}lder inequality and the fact that $1+a+p>0$, we derive
\begin{eqnarray}\label{3-13}
\frac{1}{R_j^{n+1}}\int_{\Gamma_{R_j}}u(y)y_n d\sigma_y
&\leq& \frac{R_j^{\frac{n-1-a}{p}}}{R_j^{n+1}}{\left(\frac{1}{R_j^{n-1-a}}\int_{\Gamma_{R_j}}u^{p}(y)y_n d\sigma_y\right)}^{\frac{1}{p}} {\left(\int_{\Gamma_{R_j}}y_n d\sigma_y\right)}^{1-\frac{1}{p}}\nonumber\\
&\leq& \frac{R_j^{\frac{n-1-a}{p}}}{R_j^{n+1}}{\left(\frac{1}{R_j^{n-1-a}}\int_{\Gamma_{R_j}}u^{p}(y)y_n d\sigma_y\right)}^{\frac{1}{p}} R_j^{n(1-\frac{1}{p})}\\
&\leq& R_j^{-\frac{1+a+p}{p}}o(1) \rightarrow 0, \quad\,\,\, as\,\, R_j\rightarrow \infty.\nonumber
\end{eqnarray}
Substituting \eqref{3-5-2} into \eqref{3-7}, and by \eqref{3-4}, \eqref{3-12}, \eqref{3-13}, we arrive at
\begin{equation}\label{3-14}
\left\{{\begin{array}{l} {v_{\frac{n}{2}-1}(x)=\int_{\mathbb{R}_+^n}G(x,y,2)|y|^{a}u^{p}(y)dy,}  \\{}\\
	{v_{\frac{n}{2}-2}(x)=\int_{\mathbb{R}_+^n}G(x,y,2) v_{\frac{n}{2}-1}(y)dy, }\\{}\\
	\cdots\cdots \\ {}\\
	{u(x)=\int_{\mathbb{R}_+^n}G(x,y,2)v_1(y)dy.} \\ \end{array}}\right.
\end{equation}
Then by \eqref{3-6} and Lemma \ref{lem3-0}, it is easy to see that
\begin{eqnarray}\label{3-15}
u(x)&=&\int_{\mathbb{R}_+^n}G(x,y,2)v_1(y)dy \\
\nonumber &=&\int_{\mathbb{R}_+^n}G(x,y,2)\int_{\mathbb{R}_+^n}G(y,z,2)v_2(z)dzdy  \\
\nonumber &=&\int_{\mathbb{R}_+^n}G(x,y,4)v_2(y)dy\\
\nonumber &=&\cdots\\
\nonumber &=&\int_{\mathbb{R}_+^n} G^+(x,y)|y|^{a}u^{p}(y)dy.
\end{eqnarray}

\emph{(ii)} Next, we consider the case $n=2$.

For $n=2$, the Green's function $G^{+}_R(x,y)$ associated with $-\Delta$ for the half disk $B_R^+$ is given by
\begin{equation}\label{Greenball}
G^{+}_R(x,y):=\frac{1}{2\pi}\left(\ln\frac{1}{|x-y|}-\ln\frac{1}{\left|\frac{R^{2}x}{|x|^{2}}-y\right|}\right)
-\frac{1}{2\pi}\left(\ln\frac{1}{|\bar{x}-y|}-\ln\frac{1}{\left|\frac{R^{2}\bar{x}}{|x|^{2}}-y\right|}\right).
\end{equation}
Now we give some essential properties of the above Green's functions. First,
\begin{equation}\label{3-4p}
G^{+}_R(x,y)\rightarrow G^{+}(x,y)=\frac{1}{2\pi}\left(\ln\frac{1}{|x-y|}-\ln\frac{1}{|\bar{x}-y|}\right), \,\,\,\,\,\,\text{as}\,\,R\rightarrow+\infty.
\end{equation}
Second, let $\Gamma_R$ be the semi-circle part of $\partial B_R^+$, then for each fixed $x \in B_R^+$ and any $y \in \Gamma_R$,
\begin{equation}\label{3-5p}
\frac{\partial G^{+}_R(x,y)}{\partial\nu_y}=-\frac{1}{2\pi}R\left(1-\frac{|x|^2}{R^2}\right)\left(\frac{1}{|x-y|^{2}}-\frac{1}{|\bar{x}-y|^{2}} \right)<0,
\end{equation}
moreover, for each fixed $x \in B_R^+$, any $y \in \Gamma_R$ and $R$ sufficiently large,
\begin{gather}
G^{+}(x,y)=\frac{1}{2\pi}\left(\ln\frac{1}{|x-y|}-\ln\frac{1}{|\bar{x}-y|}\right) \sim \frac{y_2}{R^{2}}, \quad \text{as} \,\,\, R\rightarrow+\infty,\label{3-5-1p}\\
\left|\frac{\partial G^{+}_R(x,y)}{\partial\nu_y}\right|=\frac{1}{2\pi}R\left(1-\frac{|x|^2}{R^2}\right)\left(\frac{1}{|x-y|^{2}}-\frac{1}{|\bar{x}-y|^{2}} \right) \sim \frac{y_2}{R^{3}}.\label{3-5-2p}
\end{gather}

Multiplying both sides of PDE \eqref{NPDE} by $G^{+}_R(x,y)$ and integrating on $B_R^+$ by parts, we have
\begin{equation}\label{3-7p}
\int_{B_R^+}G^{+}_R(x,y)|y|^{a}u^{p}(y)dy=u(x)+\int_{\Gamma_R}u(y)\frac{\partial G^{+}_R(x,y)}{\partial \nu_y}d\sigma_y.
\end{equation}
By \eqref{3-5p} and \eqref{3-7p}, we derive
\begin{equation}\label{3-8p}
\int_{B_R^+}G^{+}_R(x,y)|y|^{a}u^{p}(y)dy\leq u(x).
\end{equation}
By letting $R\rightarrow \infty$ and \eqref{3-4p}, we deduce
\begin{equation}\label{3-9p}
\int_{\mathbb{R}_+^2}G^{+}(x,y)|y|^{a}u^{p}(y)dy<\infty.
\end{equation}
By \eqref{3-9p}, we conclude that there exists a sequence $R_j\rightarrow \infty$ such that
\begin{equation}\label{3-10-2p}
R_j\int_{\Gamma_{R_j}}G^{+}(x,y)|y|^{a}u(y)d\sigma_y\rightarrow 0, \quad\quad as\,\, R_j\rightarrow \infty,
\end{equation}
and hence, from \eqref{3-5-1p}, it follows easily that
\begin{equation}\label{3-11-2p}
\frac{1}{R_j^{1-a}}\int_{\Gamma_{R_j}}u^{p}(y)y_{2}d\sigma_y\rightarrow 0, \quad\quad as\,\, R_j\rightarrow \infty.
\end{equation}
By \eqref{3-11-2p}, H\"{o}lder inequality and the fact that $1+a+p>0$, we derive
\begin{eqnarray}\label{3-13p}
\frac{1}{R_j^{3}}\int_{\Gamma_{R_j}}u(y)y_2 d\sigma_y
&\leq& \frac{R_j^{\frac{1-a}{p}}}{R_j^{3}}{\left(\frac{1}{R_j^{1-a}}\int_{\Gamma_{R_j}}u^{p}(y)y_2 d\sigma_y\right)}^{\frac{1}{p}} {\left(\int_{\Gamma_{R_j}}y_2 d\sigma_y\right)}^{1-\frac{1}{p}}\nonumber\\
&\leq& \frac{R_j^{\frac{1-a}{p}}}{R_j^{3}}{\left(\frac{1}{R_j^{1-a}}\int_{\Gamma_{R_j}}u^{p}(y)y_2 d\sigma_y\right)}^{\frac{1}{p}} R_j^{2(1-\frac{1}{p})}\\
&\leq& R_j^{-\frac{1+a+p}{p}}o(1) \rightarrow 0, \quad\,\,\, as\,\, R_j\rightarrow \infty.\nonumber
\end{eqnarray}
Substituting \eqref{3-5-2p} into \eqref{3-7p}, and by \eqref{3-4p}, \eqref{3-13p}, we finally arrive at
\begin{equation}\label{3-14p}
u(x)=\int_{\mathbb{R}_+^2}G^{+}(x,y)|y|^{a}u^{p}(y)dy.
\end{equation}

This completes our proof of Theorem \ref{equivalence}.

\section{The proof of theorem \ref{thmIE}}

In this section, we will carry out the proof of Theorem \ref{thmIE}	by applying the method of scaling spheres in integral forms developed by Dai and Qin in \cite{DQ0}.

Suppose $u$ is a nonnegative continuous solution of IE \eqref{IE} but $u\not\equiv0$, we will derive a contradiction via the method of scaling spheres in integral forms.

In order to apply the method of scaling spheres, we first give some definitions. One can easily see that $u\not\equiv0$ implies $u>0$ in $\mathbb{R}^{n}_{+}$. Let $\lambda>0$ be an arbitrary positive real number and let the scaling half sphere be
\begin{equation}\label{4-1}
S^{+}_{\lambda}=\{x\in\overline{\mathbb{R}^{n}_+}:\, |x|=\lambda\}.
\end{equation}
We denote the reflection of $x$ about the sphere $\{x\in\mathbb{R}^{n}\,|\,|x|=\lambda\}$ by $x^{\lambda}:=\frac{\lambda^{2}x}{|x|^{2}}$ and let
\begin{equation}\label{2-5+}
  B^{+}_{\lambda}(0):=B_{\lambda}(0)\cap\mathbb{R}^{n}_{+}, \quad\quad \widetilde{B^{+}_{\lambda}}(0):=\{x\in\mathbb{R}^{n}_{+}: \, x^{\lambda}\in B^{+}_{\lambda}(0)\}.
\end{equation}
Define the Kelvin transform of $u$ centered at $0$ by
\begin{equation}\label{Kelvin}
u_{\lambda}(x)=u\left(\frac{\lambda^{2}x}{|x|^{2}}\right)
\end{equation}
for arbitrary $x\in\overline{\mathbb{R}^{n}_{+}}\setminus\{0\}$. It's obvious that the Kelvin transform $u_{\lambda}$ may have singularity at $0$ and $\lim_{|x|\rightarrow\infty}u_{\lambda}(x)=u(0)=0$. By \eqref{Kelvin}, one can infer from the regularity assumptions on $u$ that $u_{\lambda}\in C(\overline{\mathbb{R}^{n}_{+}}\setminus\{0\})$.

By direct calculations, one can verify that
\begin{gather}
G^+(x,y)>G^+(x,y^\lambda),\,\,\,\,\,\,\,\,\forall \,\, x,y\in B_{\lambda}^+(0),\label{4-2}\\
\left|x^\lambda-y\right|=\frac{|y|}{|x|}\left|x-y^\lambda\right|, \,\,\,|x|=|\bar{x}|,\,\,\,\overline{x^\lambda}={\bar{x}}^\lambda.\label{4-3}
\end{gather}

We can deduce from \eqref{IE}, \eqref{Kelvin} and \eqref{4-3} that (for the invariance properties of fractional or higher order Laplacians under the Kelvin type transforms, please refer to \cite{CGS,CL,CLO,Lin,WX})
\begin{eqnarray}\label{4-4}
u_\lambda(x)&=&u\left(\frac{\lambda^{2}x}{|x|^{2}}\right)=\int_{\mathbb{R}^n_+} G^+(x^{\lambda},y)|y|^au^p(y) dy\\
&=& C_{n}\int_{\mathbb{R}^n_+} \left(\ln{\frac{1}{|x^\lambda-y|}}-\ln{\frac{1}{|\overline{x^\lambda}-y|}}\right)|y|^au^p(y) dy\nonumber\\
&=& C_{n}\int_{\mathbb{R}^n_+} \left(\ln{\frac{1}{|x^\lambda-y|}}-\ln{\frac{1}{|\overline{x}^\lambda-y|}}\right)|y|^au^p(y) dy\nonumber\\
&=& C_{n}\int_{\mathbb{R}^n_+} \left(\ln{\frac{|x|}{|y||x-y^\lambda|}}-\ln{\frac{|\bar{x}|}{|y||\overline{x}-y^\lambda|}}\right)|y|^au^p(y) dy\nonumber\\
&=& C_{n}\int_{\mathbb{R}^n_+} \left(\ln{\frac{1}{|x-y^\lambda|}}-\ln{\frac{1}{|\overline{x}-y^\lambda|}}\right)|y|^au^p(y) dy\nonumber\\
&=& \int_{\mathbb{R}^n_+} G^+(x,y^\lambda)|y|^au^p(y) dy.\nonumber
\end{eqnarray}

Let $\omega^{\lambda}(x):=u_{\lambda}(x)-u(x)$ for any $x\in\overline{B^{+}_{\lambda}(0)}\setminus\{0\}$. Since $u$ satisfies \eqref{IE}, by changing variables, we have
\begin{align}\label{4-5}
   u(x)&=\int_{\mathbb{R}^{n}_{+}} G^{+}(x,y)|y|^{a} u^{p}(y)dy\\
   &=\int_{B^{+}_\lambda(0)} G^{+}(x,y)|y|^{a} u^{p}(y)dy+\int_{\mathbb{R}^{n}_{+}\setminus B^{+}_\lambda(0)} G^{+}(x,y)|y|^{a} u^{p}(y)dy\nonumber\\
   &=\int_{B^{+}_\lambda(0)} G^{+}(x,y)|y|^{a} u^{p}(y)dy+\int_{B^{+}_\lambda(0)} {\left(\frac{\lambda}{|y|}\right)}^{2(n+a)}G^{+}(x,y^\lambda)|y|^{a} u_\lambda^{p}(y)dy.\nonumber
\end{align}
Similarly, by \eqref{4-4}, we obtain
\begin{align}\label{4-6}
u_\lambda(x)&=\int_{\mathbb{R}^{n}_{+}} G^{+}(x,y^\lambda)|y|^{a}u^{p}(y)dy\\
&=\int_{B^{+}_\lambda(0)} G^{+}(x,y^\lambda)|y|^{a}u^{p}(y)dy +\int_{B^{+}_\lambda(0)} {\left(\frac{\lambda}{|y|}\right)}^{2(n+a)}G^{+}(x,y)|y|^{a} u_{\lambda}^{p}(y)dy.\nonumber
\end{align}
Then, by \eqref{4-5} and \eqref{4-6}, we arrive at
\begin{align}\label{4-7}
\omega^{\lambda}(x)&=u_{\lambda}(x)-u(x)\\
&=\int_{B^{+}_\lambda(0)}\left(G^{+}(x,y)-G^{+}(x,y^\lambda)\right)|y|^{a}\left( {\left(\frac{\lambda}{|y|}\right)}^{2(n+a)}u_\lambda^{p}(y)-u^{p}(y)\right)dy \nonumber
\end{align}
for every $x\in B^{+}_{\lambda}(0)$.

Now we can carry out the process of scaling spheres in two steps.

\emph{Step 1. Start dilating the sphere from near $\lambda=0$.}
We will first show that, for $\lambda>0$ sufficiently small,
\begin{equation}\label{4-8}
\omega^\lambda(x)\geq 0,\,\,\,\quad \forall \,\, x\in B^{+}_{\lambda}(0).
\end{equation}
Define
\begin{equation}\label{4-9}
(B^{+}_\lambda)^-:=\{x\in B^{+}_{\lambda}(0)\,|\,\omega_\lambda(x)<0\}.
\end{equation}

Through elementary calculations, one can obtain that for any $x,\,y \in B_{\lambda}^+(0)$, $x\neq y$,
\begin{eqnarray}\label{GP}
G^+(x,y)&=&C_{n}\left(\ln{\frac{1}{|x-y|}}-\ln{\frac{1}{|\bar{x}-y|}}\right)\\
&=& \frac{C_{n}}{2} \ln{\frac{|\bar{x}-y|^2}{|x-y|^2}}\nonumber\\
&=& C \ln{\left(1+\frac{4x_ny_n}{|x-y|^2}\right)} \nonumber\\
&\leq& C \ln{\left(1+\frac{4\lambda^2}{|x-y|^2}\right)}. \nonumber
\end{eqnarray}
It is well known that
\begin{equation}\label{wk}
\ln{(1+t)}=o(t^\varepsilon), \,\,\quad \text{as}\,\, t\rightarrow +\infty,
\end{equation}
where $\varepsilon$ is an arbitrary positive real number. This implies, for any given $\varepsilon>0$, there exists a $\delta(\varepsilon)>0$ such that
\begin{equation}\label{ln}
\ln{(1+t)}\leq t^\varepsilon, \,\,\qquad \forall \, t>\frac{4}{{\delta(\varepsilon)}^2}.
\end{equation}

Therefore, by \eqref{GP}, \eqref{ln} and straightforward calculations, we have the following lemma that states some basic estimates for Green's function $G^+(x,y)$.
\begin{lem}\label{Gr}
Assume $G^{+}(x,y)$ be the Green's functions in integral equation \eqref{IE}. Then we have
\begin{gather}
G^+(x,y)\leq C\lambda^{2\varepsilon} \frac{1}{|x-y|^{2\varepsilon}}, \,\,\qquad \forall\,x,\,y \in B_{\lambda}^+(0), \,\,|x-y|<\lambda \delta(\varepsilon);\label{GP-1}\\
G^+(x,y)\leq C \ln\left(1+\frac{4}{{\delta(\varepsilon)}^2}\right), \,\,\qquad \forall\,x,\,y \in B_{\lambda}^+(0), \,\, |x-y|\geq\lambda \delta(\varepsilon);\label{GP-2}\\
G^+(x,y)\leq C' \frac{4x_ny_n}{|x-y|^{2}}, \,\,\qquad \forall \, x,y\in\mathbb{R}^{n}_{+},\,\, x\neq y;\label{GP-3}\\
G^+(x,y)\geq C'' \frac{4x_ny_n}{|x-y|^{2}}, \,\,\,\qquad \forall \, x,y\in\mathbb{R}^{n}_{+},\,\,\,\, \frac{|x|}{|y|}\leq\frac{1}{100} \,\,\, \text{or} \,\,\, \frac{|y|}{|x|}\leq\frac{1}{100}.\label{GP-4}
\end{gather}
\end{lem}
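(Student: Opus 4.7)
The plan is to derive all four estimates from the single explicit expression recorded in display \eqref{GP}, namely
\[
G^+(x,y) \;=\; C \ln\!\left(1+\frac{4x_n y_n}{|x-y|^2}\right),
\]
combined with two elementary facts about the logarithm: the global pointwise bound $\ln(1+t)\le t$ for $t\ge 0$, and the sublinear growth statement \eqref{ln}, which guarantees $\ln(1+t)\le t^{\varepsilon}$ once $t>4/\delta(\varepsilon)^2$.

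For \eqref{GP-1}, inside $B^{+}_\lambda(0)$ one has $4x_n y_n \le 4\lambda^2$, so the argument of the logarithm is bounded by $4\lambda^2/|x-y|^2$; the assumption $|x-y|<\lambda\delta(\varepsilon)$ forces this quantity to exceed the threshold $4/\delta(\varepsilon)^2$ in \eqref{ln}, giving $\ln(1+4\lambda^2/|x-y|^2)\le(4\lambda^2/|x-y|^2)^{\varepsilon}$, which is exactly the claimed bound. For \eqref{GP-2}, the complementary regime $|x-y|\ge\lambda\delta(\varepsilon)$ gives $4x_n y_n/|x-y|^2\le 4/\delta(\varepsilon)^2$, and monotonicity of $\ln$ yields the stated constant. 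For \eqref{GP-3}, I apply $\ln(1+t)\le t$ directly to the expression for $G^+$.

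The only item that requires a small amount of care is the lower bound \eqref{GP-4}. The key observation is that when either $|x|/|y|\le 1/100$ or $|y|/|x|\le 1/100$, the triangle inequality forces $|x-y|\ge\tfrac{99}{100}\max\{|x|,|y|\}$, while simultaneously $x_n y_n\le|x|\,|y|\le\tfrac{1}{100}\max\{|x|,|y|\}^2$. Consequently the argument $t:=4x_ny_n/|x-y|^2$ is bounded above by an absolute constant $T$ independent of $x,y$. On the compact interval $[0,T]$ one has the linear lower bound $\ln(1+t)\ge c_T\, t$ for some $c_T>0$, and plugging in $t=4x_ny_n/|x-y|^2$ produces \eqref{GP-4} with $C'':=C\cdot c_T$.

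I expect no serious obstacle in this lemma. The only mildly delicate point is verifying the uniform upper bound on $t$ in step (iv); once that is in place, each of the four estimates reduces to either monotonicity of the logarithm or one of the two log inequalities recalled above.
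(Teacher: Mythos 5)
Your proof is correct and follows exactly the route the paper intends; the authors do not supply a proof of Lemma \ref{Gr}, merely asserting that it follows from \eqref{GP}, \eqref{ln} and ``straightforward calculations,'' and your derivation is the straightforward calculation they have in mind. In particular, the one point you flagged as requiring care, the uniform bound $t=4x_ny_n/|x-y|^2\le 400/9801$ under the hypothesis of \eqref{GP-4} (via $|x-y|\ge\tfrac{99}{100}\max\{|x|,|y|\}$ and $x_ny_n\le|x|\,|y|\le\tfrac{1}{100}\max\{|x|,|y|\}^2$), together with the fact that $\ln(1+t)/t$ is bounded below on $(0,T]$, is the only nontrivial step, and you have handled it correctly.
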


By the assumption $a>-n$, \eqref{4-2}, \eqref{4-7} \eqref{GP-1} and \eqref{GP-2}, we have, for any $x\in (B^{+}_\lambda)^-$,
\begin{eqnarray}\label{4-10}
\omega^{\lambda}(x)&=&\int_{B^{+}_\lambda(0)} \left(G^{+}(x,y)-G^{+}(x,y^\lambda)\right)|y|^{a}\left( {\left(\frac{\lambda}{|y|}\right)}^{2(n+a)}u_\lambda^{p}(y)-u^{p}(y)\right)dy\nonumber\\
&>& \int_{B^{+}_\lambda(0)} \left(G^{+}(x,y)-G^{+}(x,y^\lambda)\right)|y|^{a}\left( u_\lambda^{p}(y)-u^{p}(y)\right)dy\nonumber\\
&\geq& \int_{(B^{+}_\lambda)^-} \left(G^{+}(x,y)-G^{+}(x,y^\lambda)\right)|y|^{a}\left(u_\lambda^{p}(y)-u^{p}(y)\right)dy\\
&\geq& \int_{(B^{+}_\lambda)^-} G^{+}(x,y)|y|^{a}\left( u_\lambda^{p}(y)-u^{p}(y)\right)dy\nonumber\\
&\geq& C\lambda^{2\varepsilon}\int_{(B^{+}_\lambda)^-\,\cap\,B_{\lambda \delta(\varepsilon)}(x)}\frac{1}{|x-y|^{2\varepsilon}} |y|^{a}u^{p-1}(y)\omega^{\lambda}(y)dy\nonumber\\
&\,& +C(\delta(\varepsilon))\int_{(B^{+}_\lambda)^-\setminus B_{\lambda \delta(\varepsilon)}(x)} |y|^{a}u^{p-1}(y)\omega^{\lambda}(y)dy.\nonumber
\end{eqnarray}

By \eqref{4-10}, Hardy-Littlewood-Sobolev inequality and H\"{o}lder inequality, for arbitrary $\frac{n}{2\varepsilon}<q<\infty$, we obtain
\begin{eqnarray}\label{4-11}
{\| \omega^{\lambda}\|}_{L^q((B^{+}_{\lambda})^-)}
&\leq& C\lambda^{2\varepsilon}{\left\| {\int_{(B^{+}_{\lambda})^-\,\cap\,B_{\lambda \delta(\varepsilon)}(x)}\frac{1}{|x-y|^{2\varepsilon}} |y|^{a}u^{p-1}(y)\omega^{\lambda}(y)dy}\right\|}_{L^q((B^{+}_{\lambda})^-)}\nonumber\\
&\,& +C(\delta(\varepsilon))|(B^{+}_{\lambda})^-|^{\frac{1}{q}}\int_{(B^{+}_{\lambda})^-} |y|^{a}u^{p-1}(y)|\omega^{\lambda}(y)|dy\nonumber\\
&\leq& C\lambda^{2\varepsilon}{\left\| {\int_{(B^{+}_{\lambda})^-}\frac{1}{|x-y|^{2\varepsilon}} |y|^{a}u^{p-1}(y)\omega^{\lambda}(y)dy}\right\|}_{L^q((B^{+}_{\lambda})^-)}\nonumber\\
&\,& +C(\delta(\varepsilon))|(B^{+}_{\lambda})^-|^{\frac{1}{q}}\int_{(B^{+}_{\lambda})^-} |y|^{a}u^{p-1}(y)|\omega^{\lambda}(y)|dy\\
&\leq& C\lambda^{2\varepsilon}{\left\| |x|^{a}u^{p-1}\omega^{\lambda}\right\|}_{L^{\frac{nq}{n+(n-2\varepsilon)q}}((B^{+}_{\lambda})^-)}\nonumber\\
&\,& +C(\delta(\varepsilon))|(B^{+}_{\lambda})^-|^{\frac{1}{q}}\int_{(B^{+}_{\lambda})^-} |y|^{a}u^{p-1}(y)|\omega^{\lambda}(y)|dy\nonumber\\
&\leq& C\lambda^{2\varepsilon}{\left\| |x|^{a}u^{p-1}\right\|}_{L^{\frac{n}{n-2\varepsilon}}((B^{+}_{\lambda})^-)}{\left\| \omega^{\lambda}\right\|}_{L^{q}((B^{+}_{\lambda})^-)}\nonumber\\
&\,& +C(\delta(\varepsilon))|(B^{+}_{\lambda})^-|^{\frac{1}{q}}{\left\| |x|^{a}u^{p-1}\right\|}_{L^{\frac{q}{q-1}}((B^{+}_{\lambda})^-)}{\left\| \omega^{\lambda}\right\|}_{L^{q}((B^{+}_{\lambda})^-)}.\nonumber
\end{eqnarray}
Since $a>-n$, we first choose $\varepsilon>0$ sufficiently small such that $\frac{na}{n-2\varepsilon}>-n$, then choose $q>\frac{n}{2\varepsilon}$ sufficiently large such that $\frac{qa}{q-1}>-n$. Then from the assumption that $u$ is continuous in $\overline{\mathbb{R}^{n}_{+}}$, it is obvious that $|x|^{a}u^{p-1} \in L^{\frac{n}{n-2\varepsilon}}_{loc}(\overline{\mathbb{R}^{n}_{+}})\cap L^{\frac{q}{q-1}}_{loc}(\overline{\mathbb{R}^{n}_{+}})$. Therefore, there exists $\delta_0>0$ sufficiently small, such that
\begin{equation}\label{7-2}
  C\lambda^{2\varepsilon}{\||x|^{a}u^{p-1}\|}_{L^{\frac{n}{n-2\varepsilon}}((B^{+}_{\lambda})^-)}+C(\delta(\varepsilon))|(B^{+}_{\lambda})^-|^{\frac{1}{q}}{\left\| |x|^{a}u^{p-1}\right\|}_{L^{\frac{q}{q-1}}((B^{+}_{\lambda})^-)}<\frac{1}{2}
\end{equation}
for any $0<\lambda<\delta_0$. Thus, by \eqref{4-11}, we must have
\begin{equation}\label{7-1}
  {\| \omega^{\lambda}\|}_{L^q((B^{+}_{\lambda})^-)}=0
\end{equation}
for any $0<\lambda<\delta_0$. From the continuity of $\omega^{\lambda}$ in $\overline{\mathbb{R}^{n}_{+}}\setminus\{0\}$ and the definition of $(B^{+}_{\lambda})^-$, we immediately get that $(B^{+}_{\lambda})^-=\emptyset$ and hence \eqref{4-8} holds true for any $0<\lambda<\delta_0$. This completes Step 1.

\emph{Step 2. Dilate the half sphere $S^{+}_{\lambda}$ outward until $\lambda=+\infty$ to derive lower bound estimates on $u$ in a cone.} Step 1 provides us a starting point to dilate the half sphere $S^{+}_{\lambda}$ from near $\lambda=0$. Now we dilate the half sphere $S^{+}_{\lambda}$ outward as long as \eqref{4-8} holds. Let
\begin{equation}\label{4-12}
  \lambda_{0}:=\sup\{\lambda>0\,|\, \omega^{\mu}\geq0 \,\, \text{in} \,\, B^{+}_{\mu}(0), \,\, \forall \, 0<\mu\leq\lambda\}\in(0,+\infty],
\end{equation}
and hence, one has
\begin{equation}\label{4-13}
  \omega^{\lambda_{0}}(x)\geq0, \quad\quad \forall \,\, x\in B^{+}_{\lambda_{0}}(0).
\end{equation}

In what follows, we will prove $\lambda_{0}=+\infty$ by driving a contradiction under the assumption that $\lambda_0<+\infty$.

In fact, suppose $\lambda_0<+\infty$, we must have
\begin{equation}\label{4-14}
  \omega^{\lambda_{0}}(x)\equiv 0, \quad\quad \forall \,\, x\in B^{+}_{\lambda_{0}}(0).
\end{equation}
Suppose on the contrary that \eqref{4-14} does not hold, that is, there exists a $x_0 \in B^{+}_{\lambda_{0}}(0)$ such that $\omega^{\lambda_{0}}(x_0)>0$. Then, by \eqref{4-10} and \eqref{4-13}, we have
\begin{equation}\label{4-15}
  \omega^{\lambda_{0}}(x)>0, \quad\quad \forall \,\, x\in B^{+}_{\lambda_{0}}(0).
\end{equation}

Choose $\delta_1>0$ sufficiently small, which will be determined later. Define the narrow region
\begin{equation}\label{4-16}
A_{\delta_1}:=\{x\in B^{+}_{\lambda_{0}}(0)\,|\,dist(x,\partial B^{+}_{\lambda_{0}}(0))<\delta_1\}.
\end{equation}
Note that $A_{\delta_1}=\{x\in B^{+}_{\lambda_{0}}(0)\,|\,x_n<\delta_1\,\,\,\text{or}\,\,\,\lambda_0-\delta_1<|x|<\lambda_0\}$.

Since that $\omega^{\lambda_{0}}$ is continuous in $\overline{\mathbb{R}^{n}_{+}}\setminus\{0\}$ and $A_{\delta_1}^c:=B^{+}_{\lambda_0}(0)\setminus A_{\delta_1} $ is a compact subset, there exists a positive constant $C_0$ such that
\begin{equation}\label{4-17}
  \omega^{\lambda_{0}}(x)>C_0, \quad\quad \forall \,\, x\in A_{\delta_1}^c.
\end{equation}
By continuity, we can choose $\delta_2>0$ sufficiently small, such that, for any $\lambda\in [\lambda_0,\,\lambda_0+\delta_2]$,
\begin{equation}\label{4-18}
  \omega^{\lambda}(x)>\frac{C_0}{2}, \quad\quad \forall \,\, x\in A_{\delta_1}^c.
\end{equation}
Hence we must have
\begin{equation}\label{7-3}
  (B^{+}_{\lambda})^-\subset B^{+}_{\lambda}(0)\setminus A^{c}_{\delta_1}:=\left(B^{+}_{\lambda}(0)\setminus B^{+}_{\lambda_{0}}(0)\right)\cup A_{\delta_{1}}
\end{equation}
for any $\lambda\in [\lambda_0,\,\lambda_0+\delta_2]$. By \eqref{4-11} and local integrability of $|x|^{a}u^{p-1}$ in $\overline{\mathbb{R}^{n}_{+}}$, we can choose $\delta_1$ and $\delta_2$ sufficiently small such that, for any $\lambda\in [\lambda_0,\,\lambda_0+\delta_2]$,
\begin{equation}\label{7-4}
  C\lambda^{2\varepsilon}{\||x|^{a}u^{p-1}\|}_{L^{\frac{n}{n-2\varepsilon}}((B^{+}_{\lambda})^-)}+C(\delta(\varepsilon))|(B^{+}_{\lambda})^-|^{\frac{1}{q}}{\left\| |x|^{a}u^{p-1}\right\|}_{L^{\frac{q}{q-1}}((B^{+}_{\lambda})^-)}<\frac{1}{2}.
\end{equation}
Then, by the same argument as in step 1, we obtain that for any $\lambda\in[\lambda_0,\,\lambda_0+\delta_2]$,
\begin{equation}\label{4-19}
  \omega^{\lambda}(x)\geq 0, \quad\quad \forall \,\, x\in B^{+}_{\lambda}(0).
\end{equation}
This contradicts the definition of $\lambda_0$. Hence, \eqref{4-14} must hold true.

However, by \eqref{4-2}, \eqref{4-7}, \eqref{4-14} and the fact that $n+a>0$, we obtain
\begin{eqnarray}\label{4-20}
0&=&\omega^{\lambda_{0}}(x)=u_{\lambda_0}(x)-u(x)\\
&=&\int_{B^{+}_{\lambda_0}(0)} \left(G^{+}(x,y)-G^{+}(x,y^{\lambda_0})\right)|y|^{a}\left( {\left(\frac{\lambda_0}{|y|}\right)}^{2(n+a)}-1\right)u^{p}(y)dy>0\nonumber
\end{eqnarray}
for any $x\in\, B^{+}_{\lambda_{0}}(0)$, which is absurd. Thus we must have $\lambda_0=+\infty$, that is,
\begin{equation}\label{4-21}
u(x)\geq u\left(\frac{\lambda^{2}x}{|x|^{2}}\right), \quad\quad \forall \,\, x\in\mathbb{R}^{n}_{+},\,\,\,|x|\geq\lambda, \quad \forall \,\, 0<\lambda<+\infty.
\end{equation}
Therefore, we obtain that $u$ is radially nondecreasing. For arbitrary $|x|\geq1$, $x\in\mathbb{R}^{n}_{+}$, let $\lambda:=\sqrt{|x|}$, then \eqref{4-21} yields that
\begin{equation}\label{4-22}
u(x)\geq u\left(\frac{x}{|x|}\right),
\end{equation}
and hence, we arrive at the following lower bound estimate:
\begin{equation}\label{4-23}
u(x)\geq\min_{|x|=1,\,x_{n}\geq\frac{1}{\sqrt{n}}}u(x):=C_{0}>0, \quad\quad \forall \,\, |x|\geq1, \,\,\, x_{n}\geq\frac{|x|}{\sqrt{n}}.
\end{equation}

The lower bound estimate \eqref{4-23} can be improved remarkably using the ``Bootstrap" iteration technique and the integral equation \eqref{IE}.

In fact, let $\mu_{0}:=0$, we infer from the integral equation \eqref{IE}, \eqref{GP-4} and \eqref{4-23} that, for any $|x|\geq1$ and $x_{n}\geq\frac{|x|}{\sqrt{n}}$,
\begin{eqnarray}\label{8-1}
  u(x)&\geq&\int_{0<|y|\leq\frac{1}{100}|x|,\,y_{n}\geq\frac{|y|}{\sqrt{n}}}G^{+}(x,y)|y|^{a}C^{p}_{0}|y|^{p\mu_{0}}dy \\
  \nonumber &\geq&C\int_{0<|y|\leq\frac{1}{100}|x|,\,y_{n}\geq\frac{|y|}{\sqrt{n}}}
  \frac{x_{n}y_{n}}{|x-y|^{2}}|y|^{p\mu_{0}+a}dy \\
  \nonumber &\geq&\frac{C}{|x|}\int^{\frac{|x|}{100}}_{0}r^{n+a+p\mu_{0}}dr \\
  \nonumber &\geq& C_{1}|x|^{p\mu_{0}+(n+a)},
\end{eqnarray}
where we have used the fact $(n+a)+p\mu_{0}>0$ since $a>-n$ and $1\leq p<+\infty$. Let $\mu_{1}:=p\mu_{0}+(n+a)$. Due to $1\leq p<+\infty$ and $a>-n$, our important observation is
\begin{equation}\label{8-2}
  \mu_{1}:=p\mu_{0}+(n+a)>\mu_{0}.
\end{equation}
Thus we have obtained a better lower bound estimate than \eqref{4-23} after one iteration, that is,
\begin{equation}\label{8-3}
  u(x)\geq C_{1}|x|^{\mu_{1}}, \quad\quad \forall \,\, |x|\geq1, \,\,\, x_{n}\geq\frac{|x|}{\sqrt{n}}.
\end{equation}

For $k=0,1,2,\cdots$, define
\begin{equation}\label{8-4}
  \mu_{k+1}:=p\mu_{k}+(n+a).
\end{equation}
Since $a>-n$ and $1\leq p<+\infty$, it is easy to see that the sequence $\{\mu_{k}\}$ is monotone increasing with respect to $k$ and $(n+a)+p\mu_{k}>0$ for any $k=0,1,2,\cdots$. Continuing the above iteration process involving the integral equation \eqref{IE}, we have the following lower bound estimates for every $k=0,1,2,\cdots$,
\begin{equation}\label{8-5}
  u(x)\geq C_{k}|x|^{\mu_{k}}, \quad\quad \forall \,\, |x|\geq1, \,\,\, x_{n}\geq\frac{|x|}{\sqrt{n}}.
\end{equation}
From \eqref{8-5} and the obvious property that
\begin{equation}\label{8-6}
   \mu_{k}\rightarrow+\infty, \quad\, \text{as} \,\, k\rightarrow+\infty,
\end{equation}
we can conclude the following lower bound estimates for positive solution $u$.
\begin{thm}\label{lower}
Assume $n\geq1$, $a>-n$ and $1\leq p<+\infty$. Suppose $u\in C(\overline{\mathbb{R}^{n}_{+}})$ is a positive solution to IE \eqref{IE}, then it satisfies the following lower bound estimates: for all $x\in\mathbb{R}^{n}_{+}$ satisfying $|x|\geq1$ and $x_{n}\geq\frac{|x|}{\sqrt{n}}$,
\begin{equation}\label{lb}
  u(x)\geq C_{\kappa}|x|^{\kappa}, \quad\quad \forall \, \kappa<+\infty.
\end{equation}
\end{thm}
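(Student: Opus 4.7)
The plan is to establish the lower bound \eqref{lb} in two stages: first extract an initial constant lower bound in a cone via the scaling spheres procedure, then amplify it indefinitely through a bootstrap iteration driven by the integral equation \eqref{IE}.

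\textbf{Stage 1: Constant lower bound in a cone via scaling spheres.} I would first run the scaling spheres method on the Kelvin transform $u_\lambda(x) = u(\lambda^2 x/|x|^2)$. Defining $\omega^\lambda := u_\lambda - u$, the identity \eqref{4-7} expresses $\omega^\lambda$ as an integral over $B^+_\lambda(0)$ with positive kernel $G^+(x,y) - G^+(x,y^\lambda)$. Using the estimates in Lemma \ref{Gr} together with Hardy--Littlewood--Sobolev and H\"older's inequalities applied to the negative set $(B^+_\lambda)^-$, one shows (for $\varepsilon$ small and $q$ large enough that $na/(n-2\varepsilon) > -n$ and $qa/(q-1) > -n$) that $\|\omega^\lambda\|_{L^q((B^+_\lambda)^-)} \leq \tfrac{1}{2}\|\omega^\lambda\|_{L^q((B^+_\lambda)^-)}$ for all sufficiently small $\lambda$, which forces $(B^+_\lambda)^- = \emptyset$. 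Defining $\lambda_0$ as the supremum of $\lambda$ for which $\omega^\mu \geq 0$ on $B^+_\mu(0)$ for all $\mu \leq \lambda$, a narrow-region argument rules out $\lambda_0 < +\infty$: if $\lambda_0$ were finite, either $\omega^{\lambda_0} \equiv 0$ (contradicted by the strict positivity in \eqref{4-20} coming from $n+a > 0$) or strict positivity plus the narrow-region estimate would allow us to push $\lambda_0$ further. Hence $\lambda_0 = +\infty$, so $u$ is radially nondecreasing along the Kelvin transform, which yields, after setting $\lambda = \sqrt{|x|}$ for $|x| \geq 1$, the initial estimate
\begin{equation*}
u(x) \geq C_0 := \min_{|x|=1,\,x_n \geq 1/\sqrt{n}} u(x) > 0, \qquad \forall \, |x| \geq 1, \,\, x_n \geq \tfrac{|x|}{\sqrt{n}}.
\end{equation*}

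\textbf{Stage 2: Bootstrap iteration.} With $\mu_0 := 0$ and $C_0$ as above, I would plug this lower bound into \eqref{IE}, restricting the $y$-integration to the cone $\{0 < |y| \leq |x|/100, \, y_n \geq |y|/\sqrt{n}\}$. On this set the ratio $|x|/|y| \geq 100$, so the lower bound \eqref{GP-4} applies and gives $G^+(x,y) \geq C' x_n y_n/|x-y|^2$. Using $x_n \gtrsim |x|$, $y_n \gtrsim |y|$, $|x-y| \lesssim |x|$ in this cone, the integral evaluates to $C_1 |x|^{p\mu_0 + (n+a)}$ (the radial integral $\int_0^{|x|/100} r^{n+a+p\mu_0}\,dr$ converges at $0$ because $n + a + p\mu_0 > -1$ thanks to $a > -n$ and $p\mu_0 \geq 0$). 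Setting $\mu_1 := p\mu_0 + (n+a) > \mu_0$ gives an improved estimate $u(x) \geq C_1 |x|^{\mu_1}$ in the same cone. Iterating this argument with $\mu_{k+1} := p\mu_k + (n+a)$ yields $u(x) \geq C_k |x|^{\mu_k}$ for every $k$, and since $a > -n$ and $p \geq 1$ force $\mu_{k+1} - \mu_k = (p-1)\mu_k + (n+a) \geq n+a > 0$, the sequence $\{\mu_k\}$ is strictly increasing and diverges to $+\infty$. Given any $\kappa < +\infty$, pick $k$ with $\mu_k > \kappa$ to conclude \eqref{lb}.

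\textbf{Main obstacle.} The delicate point is the bootstrap step: one must ensure the radial integral $\int_0^{|x|/100} r^{n+a+p\mu_k}\,dr$ converges and produces the desired power of $|x|$. This needs $n + a + p\mu_k > -1$ at every stage, which is automatic from $a > -n$ and $\mu_k \geq 0$; this is precisely why the hypothesis $a > -n$ is indispensable. The scaling-spheres portion (Stage 1) is largely a direct transcription of the argument already performed in the earlier part of Section 4, so there is no new analytic difficulty there; the only care needed is verifying that no step required $p > 1$ rather than $p \geq 1$, which one checks by inspection of the iteration (since even $p = 1$ still yields $\mu_{k+1} - \mu_k = n+a > 0$).
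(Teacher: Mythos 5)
Your proposal is correct and follows essentially the same two-stage route as the paper: Stage~1 reproduces the scaling-spheres argument (Steps~1 and~2 of Section~4) to obtain the constant cone lower bound \eqref{4-23}, and Stage~2 is exactly the paper's bootstrap iteration \eqref{8-1}--\eqref{8-6} with $\mu_{k+1}=p\mu_k+(n+a)$ and the lower Green's function estimate \eqref{GP-4}. Your observation that $p=1$ still gives $\mu_{k+1}-\mu_k=n+a>0$ is the same point the paper uses implicitly, and the convergence check $n+a+p\mu_k>-1$ is indeed the only place the hypothesis $a>-n$ enters the iteration step.
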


The lower bound estimates in Theorem \ref{lower} obviously contradicts the integral equation \eqref{IE}. In fact, since $a>-n$, by \eqref{GP-4} and \eqref{4-23}, we have
\begin{eqnarray}
+\infty>u\left(\frac{e_n}{100}\right) &=& \int_{\mathbb{R}^{n}_{+}} G\left(\frac{e_n}{100},y\right)|y|^{a} u^{p}(y)dy\\
&\geq& \int_{ |y|\geq1, \, y_{n}\geq\frac{|y|}{\sqrt{n}}} G\left(\frac{e_n}{100},y\right)|y|^{a} u^{p}(y)dy \nonumber\\
&\geq& C\int_{ |y|\geq1, \, y_{n}\geq\frac{|y|}{\sqrt{n}}} \frac{y_n}{|y|^2}|y|^{a+1}dy \nonumber\\
&\geq& C\int_{ |y|\geq1, \, y_{n}\geq\frac{|y|}{\sqrt{n}}} |y|^{a} dy \nonumber\\
&=& +\infty\nonumber,
\end{eqnarray}
where the unit vector $e_{n}:=(0,\cdots,0,1)\in\mathbb{R}^{n}_{+}$. This is a contradiction! Thus we must have $u\equiv0$ in $\overline{\mathbb{R}^{n}_{+}}$.

This concludes our proof of Theorem \ref{thmIE}.

\end{document}